\numberwithin{equation}{section}
\newtheorem{theorem}{Theorem}[section]
\newtheorem{prop}[theorem]{Proposition}
\newtheorem{defn}[theorem]{Definition}
\newtheorem{ex}[theorem]{Example}
\newtheorem{re}[theorem]{Remark}
\begin{document}
% %----------------------------------------------------------------------
%%% ----------------------------------------------------------------------
%%% ----------------------------------------------------------------------

\title[Formal deformations and extensions of `twisted' Lie algebras]
 {Formal deformations and extensions of `twisted' Lie algebras}

%%% ----------------------------------------------------------------------
%%% ----------------------------------------------------------------------
%%% ----------------------------------------------------------------------
\bibliographystyle{amsplain}
%%% ----------------------------------------------------------------------
%%% ----------------------------------------------------------------------
%%% ----------------------------------------------------------------------

\author[I. Basdouri, E. Peyghan, M.A. Sadraoui, R.Saha]{Imed Basdouri, Esmael Peyghan, Mohamed Amin Sadraoui, Ripan Saha}
\address{University of Gafsa, Faculty of Sciences Gafsa, 2112 Gafsa, Tunisia.}
\email{basdourimed@yahoo.fr}
\address{Department of Mathematics, Faculty of Science, Arak University,
	Arak, 38156-8-8349, Iran.}
\email{e-peyghan@araku.ac.ir}
%\author{A. Ali}
\address{University of Sfax, Faculty of Sciences Sfax, BP
	1171, 3038 Sfax, Tunisia.}
\email{aminsadrawi@gmail.com}
\address{Department of Mathematics, Raiganj University, Raiganj 733134, West Bengal, India}
\email{ripanjumaths@gmail.com}

\keywords{Twisted Lie algebra, Derivation, Formal deformation, Central Extension, Cohomology.}

\subjclass[2020]{17B56, 17B61, 17B99.}

%%% ----------------------------------------------------------------------
%%% ----------------------------------------------------------------------
%%% ----------------------------------------------------------------------

\begin{abstract}
The interplay between derivations and algebraic structures has been a subject of significant interest and exploration.  Inspired by Yau's twist and the Leibniz rule, we investigate the formal deformation of twisted Lie algebras by invertible derivations, herein referred to as ``InvDer Lie". We define representations of InvDer Lie, elucidate cohomology structures of order 1 and 2, and identify infinitesimals as 2-cocycles. Furthermore, we explore central extensions of InvDer Lie, revealing their intricate relationship with cohomology theory.
\end{abstract}

%%% ----------------------------------------------------------------------
%%% ----------------------------------------------------------------------
%%% ----------------------------------------------------------------------
\maketitle
%%% ----------------------------------------------------------------------
%%% ----------------------------------------------------------------------
%%% ----------------------------------------------------------------------

%%%%%%%%%%%%%%%%%%%%%%%%%%%%%%%%%%%%%%%%%%%%%%%%%%%%%%%%%%%%%%%%%%%%%%%%%%%%%%%%%%%%%%
\section{Introduction}
In the realm of Lie algebras, a field rich with structural intricacies and profound mathematical implications, the notion of derivations has long been a focal point of investigation. A derivation on a Lie algebra \( (L, [\cdot, \cdot]_\mathrm{L}) \) offers insight into how the algebraic structure morphs under certain linear maps. 

Building upon this foundation, recent research, as articulated by Y. Donald in \cite{Y0}, delves into the concept of twisted Hom algebras. Here, the aim is to construct a new Hom algebra \( (L, \mu_\alpha, \alpha) \) from an existing algebra \( (L, \mu) \), where \( \mu_\alpha = \alpha \circ \mu \). This endeavor, detailed in Theorem 2.4 of \cite{Y0}, opens avenues for exploring alternative algebraic structures and their interplay with derivations.

Derivations themselves hold a pivotal role within Lie algebra theory. A linear map \( \delta: L \rightarrow L \) is deemed a derivation on \( (L, [\cdot, \cdot]_\mathrm{L}) \) if it adheres to the Leibniz rule, succinctly expressed as \( \delta([x, y]_\mathrm{L}) = [\delta x, y]_\mathrm{L} + [x, \delta y]_\mathrm{L} \) for all \( x, y \in L \). 

The implications of derivations extend far beyond mere manipulation of algebraic elements. In \cite{T0}, for instance, LieDer pairs are introduced, offering a novel perspective on algebraic constructions through derivations. Similarly, \cite{C0} offers a comprehensive characterization of the space of all derivations of Lie algebras, denoted as \( \text{Der}(L) \). In \cite{G0}, the authors studied $3$-Lie algebras with derivations from the cohomological perspective. Also see \cite{B1,B2}, where the same concept is studied in more details.

Motivated by the profound implications of Yau's twist \cite{Y0} and the insights gleaned from the Leibniz rule, a natural progression emerges: the exploration of twisted Lie algebras through the lens of invertible derivations. While the straightforward application of the twist, given by Yau, fails to preserve the Jacobi identity. In \cite{B0}, the authors endeavors to augment this framework by imposing conditions on the derivation \( \delta \) to ensure the preservation of this fundamental algebraic property.

In this paper, we define representations of InvDer Lie algebra, and introduce cohomology of order $1$ and $2$. Following Gerstenhaber \cite{G63, G64} classical formal deformation theory, we study formal deformations of InvDer Lie algebra and the identification of infinitesimals as 2-cocycles. Furthermore, we delve into the realm of central extensions of InvDer Lie algebra, elucidating their relationship with cohomology theory and unveiling the intricate interplay between algebraic structures and geometric concepts.

Through this endeavor, we aim to contribute to the rich tapestry of Lie algebra theory, offering novel insights into the interplay between derivations, twists, and fundamental algebraic properties, with potential implications spanning diverse fields of mathematics and theoretical physics.

Throughout this paper we consider $\mathbb{K}$ as a field of characteristic $0$ and all algebras are defined over $\mathbb{K}$.
%************************************
%--------------------------------------------
	\section{Preliminary}
In this section, we recall some basic definitions and results which we shall use throughout the paper.
\begin{defn}
	Let $(\mathfrak{L}=(L,[-,-]_\mathrm{L}))$ be a Lie algebra. A linear map $\delta:L\rightarrow L$ is called a derivation on $\mathfrak{L}$ if 
	\begin{equation*}
		\delta[x,y]_\mathrm{L}=[\delta x,\delta y]_\mathrm{L}+[x,\delta y]_\mathrm{L},\quad ~\text{for all}~ x,y\in L.
	\end{equation*}
\end{defn}
We denote the set of all derivations of $\mathfrak{L}$ by $\mathbf{\mathrm{Der(L)}}$.
\begin{prop}\cite{B0}
	Let $\mathfrak{L}$ be a Lie algebra and $\delta$ be an invertible derivation. Then, we have
	\begin{equation}\label{Invderivation}
		[\delta x,\delta y]_L=\delta^2[x,y]_L ~\text{if and only if}~ \delta^{-1} \in \mathrm{Der(L)},\quad \text{for all} x,y\in L.
	\end{equation}
\end{prop}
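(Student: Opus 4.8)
The plan is to establish both implications by a single substitution device: since $\delta$ is bijective, every element of $L$ is of the form $\delta x$ for a unique $x\in L$, so an identity that holds for all pairs $(\delta x,\delta y)$ is equivalent to the corresponding identity for all pairs $(u,v)$, and conversely. Besides this, the only tool needed is the Leibniz rule for $\delta$ itself, namely $\delta[x,y]_L=[\delta x,y]_L+[x,\delta y]_L$.

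For sufficiency, assuming $\delta^{-1}\in\mathrm{Der(L)}$, I would apply its Leibniz rule to the pair $(\delta x,\delta y)$ to obtain
\[
\delta^{-1}[\delta x,\delta y]_L=[\delta^{-1}\delta x,\delta y]_L+[\delta x,\delta^{-1}\delta y]_L=[x,\delta y]_L+[\delta x,y]_L .
\]
The right-hand side equals $\delta[x,y]_L$ by the Leibniz rule for $\delta$, so $\delta^{-1}[\delta x,\delta y]_L=\delta[x,y]_L$; composing both sides with $\delta$ then gives $[\delta x,\delta y]_L=\delta^2[x,y]_L$.

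For necessity, assume $[\delta x,\delta y]_L=\delta^2[x,y]_L$ for all $x,y\in L$. Given arbitrary $u,v\in L$, set $x=\delta^{-1}u$ and $y=\delta^{-1}v$, so that $u=\delta x$ and $v=\delta y$. Then $[u,v]_L=[\delta x,\delta y]_L=\delta^2[x,y]_L$, and applying $\delta^{-1}$ yields $\delta^{-1}[u,v]_L=\delta[x,y]_L=[\delta x,y]_L+[x,\delta y]_L=[\delta^{-1}u,v]_L+[u,\delta^{-1}v]_L$. Since $u,v$ were arbitrary, $\delta^{-1}$ satisfies the Leibniz rule, i.e.\ $\delta^{-1}\in\mathrm{Der(L)}$.

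There is no genuine obstacle here beyond careful bookkeeping; the one hypothesis that must be invoked explicitly is the invertibility (in particular the surjectivity) of $\delta$, which is exactly what upgrades a one-sided rewriting into an ``if and only if''. Note also that the fact that $\delta$ is a derivation is used in both directions, and that the characteristic-zero assumption on $\mathbb{K}$ plays no role in this particular statement.
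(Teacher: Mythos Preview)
Your proof is correct in both directions and uses exactly the natural idea: the Leibniz rule for $\delta$ combined with the bijectivity of $\delta$ to pass freely between arbitrary pairs $(u,v)$ and pairs of the form $(\delta x,\delta y)$. There is nothing to add or fix.

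As for comparison: the paper does not actually supply a proof of this proposition; it is quoted from \cite{B0} without argument. Your write-up therefore stands on its own, and the approach you take (apply $\delta^{-1}$'s Leibniz rule to $(\delta x,\delta y)$ for one direction, and substitute $x=\delta^{-1}u$, $y=\delta^{-1}v$ for the other) is the expected one and matches what one finds in \cite{B0}.
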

Throughout the paper, we denote the set of all invertible derivation on $\mathfrak{L}$ satisfying equation \eqref{Invderivation} by $\mathbf{\mathrm{InvDer(L)}}$, that is, 
\begin{equation*}
	\mathrm{\textbf{InvDer(L)}}:=\{\delta:L\rightarrow L ~\mid~ \delta~ \text{is an invertible derivation, and}~ [\delta x,\delta y]_L=\delta^2[x,y]_L ~\text{for all}~ x,y\in L\}.
\end{equation*}
If $\mathrm{\delta\in InvDer(L)}$ then we say that $\delta$ is an \textbf{Inv-derivation} of $\mathfrak{L}$.\\
In the next theorem, inspired by \cite{Y0}, the authors in \cite{B0} present a Lie algebra structure by an Inv-derivation. 
\begin{theorem}\cite{B0}
	Let $(L,[-,-]_\mathrm{L})$ be a Lie algebra and $\delta:L \rightarrow L$ be an Inv-derivation. Then $(L,[-,-]_{\delta}=\delta \circ [-,-]_\mathrm{L})$ is a Lie algebra, which is called \textbf{ twisted Lie algebra by an invertible derivation }. 
\end{theorem}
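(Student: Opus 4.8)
The plan is to verify the three Lie algebra axioms for $(L,[-,-]_{\delta})$ where $[x,y]_{\delta} = \delta[x,y]_{\mathrm{L}}$, namely bilinearity, skew-symmetry, and the Jacobi identity. Bilinearity is immediate since both $\delta$ and $[-,-]_{\mathrm{L}}$ are linear, and skew-symmetry of $[-,-]_{\delta}$ follows at once from skew-symmetry of $[-,-]_{\mathrm{L}}$ together with linearity of $\delta$: $[y,x]_{\delta} = \delta[y,x]_{\mathrm{L}} = -\delta[x,y]_{\mathrm{L}} = -[x,y]_{\delta}$. So the only substantive point is the Jacobi identity.

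For the Jacobi identity I would compute $[[x,y]_{\delta},z]_{\delta} = \delta[\delta[x,y]_{\mathrm{L}},z]_{\mathrm{L}}$ and then use the defining property of an Inv-derivation in a slightly rewritten form. The key observation is that for $\delta \in \mathrm{InvDer(L)}$ the relation $[\delta a,\delta b]_{\mathrm{L}} = \delta^{2}[a,b]_{\mathrm{L}}$ can be rephrased, by substituting $a = \delta^{-1}u$, as $[u,\delta b]_{\mathrm{L}} = \delta[\delta^{-1}u, b]_{\mathrm{L}}$, and hence $\delta[u,\delta b]_{\mathrm{L}} = \delta^{2}[\delta^{-1}u,b]_{\mathrm{L}} = [u, \delta b]_{\mathrm{L}}$ composed appropriately — more directly, applying $\delta$ to $[\delta^{-1}u,\delta b]_{\mathrm{L}} = \delta[\delta^{-1}u,b]_{\mathrm{L}}$ gives $\delta[\delta^{-1}u,\delta b]_{\mathrm{L}} = [u,\delta b]_{\mathrm{L}}$. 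Applying this with $u = \delta[x,y]_{\mathrm{L}}$ and $b = z$ turns $\delta[\delta[x,y]_{\mathrm{L}},z]_{\mathrm{L}}$ into $\delta^{2}[[x,y]_{\mathrm{L}}, z]_{\mathrm{L}}$, so that
\begin{equation*}
[[x,y]_{\delta},z]_{\delta} = \delta^{2}[[x,y]_{\mathrm{L}},z]_{\mathrm{L}}.
\end{equation*}
Summing over the three cyclic permutations of $(x,y,z)$ and pulling out $\delta^{2}$ by linearity, the bracket terms assemble into $\delta^{2}$ applied to the ordinary Jacobi expression $[[x,y]_{\mathrm{L}},z]_{\mathrm{L}} + [[y,z]_{\mathrm{L}},x]_{\mathrm{L}} + [[z,x]_{\mathrm{L}},y]_{\mathrm{L}}$, which vanishes in $\mathfrak{L}$. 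Hence the cyclic sum of $[[\cdot,\cdot]_{\delta},\cdot]_{\delta}$ is zero.

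The main obstacle — really the only place any care is needed — is getting the bookkeeping right when moving the twist $\delta$ past a bracket: one must be sure that the identity $[\delta a,\delta b]_{\mathrm{L}} = \delta^{2}[a,b]_{\mathrm{L}}$ is applied to the correct pair of arguments and that the invertibility of $\delta$ is genuinely used (it is what lets us absorb the extra inner $\delta$ coming from $[x,y]_{\delta}$). An alternative, perhaps cleaner, route is to set up an isomorphism-type argument: since $\delta$ is invertible, one could try to show $\delta$ itself intertwines $[-,-]_{\delta}$ with $[-,-]_{\mathrm{L}}$ up to the twist, but the direct cyclic-sum computation above is the most transparent and I would present that. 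No deep ideas are required beyond Proposition~\ref{Invderivation}'s reformulation; the proof is essentially a one-line reduction to the untwisted Jacobi identity once the twist is commuted correctly.
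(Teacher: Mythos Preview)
Your central identity $[[x,y]_{\delta},z]_{\delta}=\delta^{2}[[x,y]_{\mathrm L},z]_{\mathrm L}$ is false in general. Cancelling one $\delta$, it asserts $[\delta w,z]_{\mathrm L}=\delta[w,z]_{\mathrm L}$ for $w=[x,y]_{\mathrm L}$; but since $\delta$ is a derivation one has $\delta[w,z]_{\mathrm L}=[\delta w,z]_{\mathrm L}+[w,\delta z]_{\mathrm L}$, so your identity forces $[[x,y]_{\mathrm L},\delta z]_{\mathrm L}=0$ for all $x,y,z$, i.e.\ $[L,L]$ is central. The manipulations leading up to it are already wrong: substituting $a=\delta^{-1}u$ into $[\delta a,\delta b]_{\mathrm L}=\delta^{2}[a,b]_{\mathrm L}$ gives $[u,\delta b]_{\mathrm L}=\delta^{2}[\delta^{-1}u,b]_{\mathrm L}$, not $\delta[\delta^{-1}u,b]_{\mathrm L}$; and the line ``$[\delta^{-1}u,\delta b]_{\mathrm L}=\delta[\delta^{-1}u,b]_{\mathrm L}$'' is precisely the unjustified claim $[a,\delta b]_{\mathrm L}=\delta[a,b]_{\mathrm L}$, which fails whenever $[\delta a,b]_{\mathrm L}\neq 0$.

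The route you set aside as an ``alternative'' is in fact the correct one and is a single line. The Inv-derivation condition reads $\delta[x,y]_{\delta}=\delta^{2}[x,y]_{\mathrm L}=[\delta x,\delta y]_{\mathrm L}$, so the bijection $\delta$ transports $[-,-]_{\delta}$ to $[-,-]_{\mathrm L}$. Equivalently $[x,y]_{\delta}=\delta^{-1}[\delta x,\delta y]_{\mathrm L}$, whence
\[
\circlearrowleft_{x,y,z}[[x,y]_{\delta},z]_{\delta}
=\delta^{-1}\Bigl(\circlearrowleft_{x,y,z}[[\delta x,\delta y]_{\mathrm L},\delta z]_{\mathrm L}\Bigr)=0
\]
by the Jacobi identity in $\mathfrak{L}$ applied to $\delta x,\delta y,\delta z$. (The paper itself does not supply a proof, quoting the result from \cite{B0}.)
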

\begin{prop} \cite{B0} \label{prop-2.4}
	Let $(L,[-,-]_\mathrm{L})$ be a Lie algebra and $\delta$ be an Inv-derivation on $L$. Then, the following equation holds
	\begin{equation*}
		\circlearrowleft_{x,y,z}[x,[y,z]_{\delta}]_\mathrm{L}=	\circlearrowleft_{x,y,z}[\delta x,[y,z]_\mathrm{L}]_\mathrm{L},\ \ \ \forall x,y,z \in L.
	\end{equation*}	
\end{prop}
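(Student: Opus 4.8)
The plan is to push the twisting map $\delta$ out of the inner bracket by means of the Leibniz rule, which reduces the asserted identity to the ordinary Jacobi identity of $\mathfrak{L}$. First I would unfold the definition $[y,z]_\delta=\delta([y,z]_\mathrm{L})$, so that the left-hand side reads $\circlearrowleft_{x,y,z}[x,\delta([y,z]_\mathrm{L})]_\mathrm{L}$.

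Next I would apply the defining property of the derivation $\delta$ to the pair $(x,[y,z]_\mathrm{L})$, namely $\delta([x,[y,z]_\mathrm{L}]_\mathrm{L})=[\delta x,[y,z]_\mathrm{L}]_\mathrm{L}+[x,\delta([y,z]_\mathrm{L})]_\mathrm{L}$, and rewrite it as $[x,[y,z]_\delta]_\mathrm{L}=\delta([x,[y,z]_\mathrm{L}]_\mathrm{L})-[\delta x,[y,z]_\mathrm{L}]_\mathrm{L}$. Summing over the three cyclic permutations of $(x,y,z)$ and using linearity of $\delta$, the first group of terms becomes $\delta$ applied to $\circlearrowleft_{x,y,z}[x,[y,z]_\mathrm{L}]_\mathrm{L}$, which vanishes by the Jacobi identity in $\mathfrak{L}$; the surviving terms are precisely the two cyclic sums being compared. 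An equivalent route is to substitute $\delta([y,z]_\mathrm{L})=[\delta y,z]_\mathrm{L}+[y,\delta z]_\mathrm{L}$ at once, expand into six nested brackets, regroup them according to which of $\delta x$, $\delta y$, $\delta z$ occurs, and apply the Jacobi identity to each of the triples $(\delta x,y,z)$, $(\delta y,z,x)$, $(\delta z,x,y)$.

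I do not anticipate a genuine obstacle: the content is simply ``Leibniz rule plus Jacobi identity'', and the only point demanding attention is the careful sign bookkeeping when the Leibniz identity is rearranged and the cyclic sums are collated. It is also worth recording that only the derivation property of $\delta$ enters here; neither its invertibility nor the supplementary relation $[\delta x,\delta y]_\mathrm{L}=\delta^2[x,y]_\mathrm{L}$ defining $\mathrm{InvDer}(L)$ is needed for this particular identity.
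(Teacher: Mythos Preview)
The paper does not supply its own proof of this proposition; it is quoted from \cite{B0}. So there is no in-paper argument to compare against, and the question reduces to whether your computation establishes the stated identity.

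Your Leibniz manipulation is correct, but track the sign. From $\delta([x,[y,z]_\mathrm{L}]_\mathrm{L})=[\delta x,[y,z]_\mathrm{L}]_\mathrm{L}+[x,\delta([y,z]_\mathrm{L})]_\mathrm{L}$ you obtain
\[
[x,[y,z]_\delta]_\mathrm{L}=\delta\bigl([x,[y,z]_\mathrm{L}]_\mathrm{L}\bigr)-[\delta x,[y,z]_\mathrm{L}]_\mathrm{L},
\]
and after taking the cyclic sum and invoking the Jacobi identity the conclusion is
\[
\circlearrowleft_{x,y,z}[x,[y,z]_\delta]_\mathrm{L}=-\circlearrowleft_{x,y,z}[\delta x,[y,z]_\mathrm{L}]_\mathrm{L},
\]
with a \emph{minus} sign, not the equality asserted in the proposition. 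Your alternative route (expanding $\delta[y,z]_\mathrm{L}=[\delta y,z]_\mathrm{L}+[y,\delta z]_\mathrm{L}$ and regrouping the six terms via Jacobi on $(\delta x,y,z)$, $(\delta y,z,x)$, $(\delta z,x,y)$) produces exactly the same signed identity. So the ``surviving terms'' are the two cyclic sums with \emph{opposite} signs, and your sketch as written does not yet prove the displayed equality.

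The proposition is nonetheless true because both sides vanish: by the theorem immediately preceding it, $[\cdot,\cdot]_\delta$ is a Lie bracket, hence $\circlearrowleft_{x,y,z}[x,[y,z]_\delta]_\delta=0$; invertibility of $\delta$ then forces $\circlearrowleft_{x,y,z}[x,[y,z]_\delta]_\mathrm{L}=0$ (this is the content of Remark~\ref{remark-2.5}), and your computation gives $\circlearrowleft_{x,y,z}[\delta x,[y,z]_\mathrm{L}]_\mathrm{L}=0$ as well. This also shows that your closing remark is too optimistic: the derivation property alone only yields the anti-equality $\mathrm{LHS}=-\mathrm{RHS}$; obtaining the stated equality (equivalently, the vanishing of either side) genuinely uses that $\delta$ is an Inv-derivation.
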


\begin{re}\label{remark-2.5}
Suppose that \(\delta\) is invertible, and
\[
\circlearrowleft_{x,y,z} [x, [y, z]_\delta]_\delta = 0.
\]
Consequently,
\[
\delta \left( \circlearrowleft_{x,y,z} [x, [y, z]_\delta] \right) = 0.
\]
Since \(\ker(\delta) = 0\), it follows that
\[
\circlearrowleft_{x,y,z} [x, [y, z]_\delta] = 0.
\]
\end{re}

\begin{defn} \cite{B0}\label{defn-twisted}
	An InvDer Lie algebra is a Lie algebra $\mathfrak{L}$ equipped with an Inv-derivation $\delta$ satisfying the \textbf{InvDer-Jacobi} identity as follows 
	\begin{equation}\label{Inv Jacobi}
		\mathrm{\circlearrowleft_{x,y,z}[\delta x,[y,z]_L]_L=0,\quad \text{for all}~ x,y,z\in L.}
	\end{equation}
\end{defn}

According to the Proposition \ref{prop-2.4} and \ref{remark-2.5}, InvDer-Jacobi idenity is redundant. Therefore, we can rewrite the Definition \ref{defn-twisted} as follows:

\begin{defn}
An InvDer Lie algebra is a Lie algebra $\mathfrak{L}$ equipped with an Inv-derivation $\delta$.
\end{defn}

We denote it by $\mathrm{(L,[-,-]_L,\delta)}$ or simply by $(\mathfrak{L},\delta)$ if there is no confusion.
\section{Representation of InvDer Lie algebras}
\label{section representation}
In this section, we study InvDer Lie algebraic structures in a detailed way, where we give some basic results.
\begin{defn}
	Let $(L_1,[-,-]_1,\delta_1)$ and $(L_2,[-,-]_2,\delta_2)$ be two InvDer Lie algebras. A linear map $\varphi : L_1 \rightarrow L_2$ is said to be an \textbf{InvDer Lie algebra homomorphism} if it is a Lie algebra homomorphism from $L_1$ to $L_2$ satisfying the following identity 
	\begin{equation}\label{eqt InvDer Lie morphism}
		\varphi \circ \delta_1=\delta_2 \circ \varphi.
	\end{equation}
\end{defn}
Recall that a Lie algebra homomorphism $\varphi$ from $(L_1,[\cdot,\cdot]_1)$ to $(L_2,[\cdot,\cdot]_2)$ is a linear map satisfying
\begin{equation*}
	\varphi ([x,y]_1)=[\varphi(x),\varphi(y)]_2,\ \ \ x,y \in L_1.
\end{equation*} 
\begin{defn}
	A \textbf{representation of InvDer Lie algebra} $(\mathfrak{L},\delta)$ on a vector space $V$ with respect to $\delta_V \in \mathrm{gl}(V)$, where $\delta_\mathrm{V}$ is invertible, is a linear map $\rho:L \rightarrow \mathrm{gl}(V)$ such that for all $x,y,\in L$, we have 
	\begin{eqnarray}
		\rho(\delta x) \circ \delta_V&=&\delta_V^2 \circ \rho(x), \label{eq rep 1} \\
		\rho([x,y]_\mathrm{L}) \circ \delta_V&=&\rho(\delta x) \circ \rho(y)-\rho(\delta y)\circ \rho(x), \label{eq rep 2}\\
		\delta_V \circ \rho(x)&=&\rho(\delta x)+\rho(x) \circ \delta_V. \label{eq rep 3}
	\end{eqnarray}
	Such a representation is denoted by $(V;\rho,\delta_V)=(\mathcal{V},\delta_V)$.
\end{defn}
\begin{ex} \label{ex adj rep}
	The adjoint representation of InvDer Lie algebra $\mathfrak{L}$, denoted by $\mathrm{ad}$, is defined by 
	\begin{equation*}
	\left\{
	\begin{array}{cc}
	&\hspace{-3cm}\mathrm{ad}: L \rightarrow L,\\
	&\hspace{-.2cm}\mathrm{ad}(x)(y)=[x,y]_\mathrm{L},\ \ \  \forall x,y \in L.
	\end{array}
	\right.
	\end{equation*}
	With the above notations, we have 
	\begin{align*}
		\mathrm{ad}(\delta x) \circ \delta&=\delta^2 \circ \mathrm{ad}(x), \\
		\mathrm{ad}([x,y]_\mathrm{L}) \circ \delta&=\mathrm{ad}(\delta x) \circ \mathrm{ad}(y)-\mathrm{ad}(\delta y) \circ \mathrm{ad}(x), \\
		\delta \circ \mathrm{ad}(x)&=\mathrm{ad}(\delta x)+\mathrm{ad}(x) \circ \delta,
	\end{align*}
	which is denoted by $(L;\mathrm{ad},\delta)$.
\end{ex}
Here, $\delta_V \in \mathrm{gl}(V)$ represents a linear transformation of $V$. What would occur if $\delta_V$ were to become a derivation, and furthermore, an Inv-derivation (implying that $\delta_V \in \mathrm{InvDer}(\mathrm{gl}(V))$)? \\
This is what we will investigate in the next proposition.
\begin{prop}
	Let $(V;\rho,\delta_V)$ be a representation of the InvDer Lie algebra $(\mathfrak{L},\delta)$. Then the following two identities holds, for all $x,y \in L$
	\begin{eqnarray}
		\delta_V \in \mathrm{Der(gl}(V)) &\Leftrightarrow& \rho(x)(\delta_V \rho(y))=\rho(y)(\delta_V \rho(x)) \ , \ \label{eq Der gl 1}\\
		\delta_V \in \mathrm{InvDer(gl}(V)) &\Leftrightarrow& \rho(x)(\delta_V^2 \rho(y))=\rho(y)(\delta_V^2 \rho(x)). \label{eq Der gl 2}
	\end{eqnarray}
\end{prop}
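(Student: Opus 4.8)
The plan is to prove both equivalences by the same kind of direct computation inside the associative algebra $\mathrm{End}(V)$, reducing everything to the three defining identities \eqref{eq rep 1}, \eqref{eq rep 2}, \eqref{eq rep 3}. The key observation is that the Lie bracket of $\mathrm{gl}(V)$ is the commutator $[A,B]=A\circ B-B\circ A$, and that \eqref{eq rep 3} is exactly the clean statement $\rho(\delta x)=\delta_V\circ\rho(x)-\rho(x)\circ\delta_V=[\delta_V,\rho(x)]$; that is, $\delta_V$ acts on the operators $\rho(x)$ precisely as the inner derivation $\mathrm{ad}_{\delta_V}$ of $\mathrm{gl}(V)$. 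Hence the conditions ``$\delta_V\in\mathrm{Der}(\mathrm{gl}(V))$'' and ``$\delta_V\in\mathrm{InvDer}(\mathrm{gl}(V))$'', once tested on the elements $\rho(x),\rho(y)$, turn into identities purely about compositions of $\rho(x),\rho(y),\delta_V$, which is the same currency in which the right-hand sides of \eqref{eq Der gl 1} and \eqref{eq Der gl 2} are written.

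For \eqref{eq Der gl 1} I would first write out the Leibniz identity for $\mathrm{ad}_{\delta_V}$ on the pair $\rho(x),\rho(y)$ and use the reformulation of \eqref{eq rep 3} to replace each $\mathrm{ad}_{\delta_V}\rho(z)$ by $\rho(\delta z)$. Then I would apply \eqref{eq rep 2} in the form $\rho(\delta x)\circ\rho(y)-\rho(\delta y)\circ\rho(x)=\rho([x,y]_{\mathrm L})\circ\delta_V$ to clear the bracket $[x,y]_{\mathrm L}$, leaving an identity only in $\rho(x),\rho(y),\delta_V$. Collecting terms and using antisymmetry in $x\leftrightarrow y$, this should reduce exactly to $\rho(x)(\delta_V\rho(y))=\rho(y)(\delta_V\rho(x))$, and reading the same steps backwards gives the converse. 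A numerical factor $2$ is expected to surface during the collection of terms; this is harmless because $\mathbb{K}$ has characteristic $0$.

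For \eqref{eq Der gl 2} the computation has the same shape, started now from the Inv-derivation identity $[\mathrm{ad}_{\delta_V}A,\mathrm{ad}_{\delta_V}B]=(\mathrm{ad}_{\delta_V})^2([A,B])$ with $A=\rho(x)$, $B=\rho(y)$. The extra ingredient is \eqref{eq rep 1}, $\rho(\delta x)\circ\delta_V=\delta_V^2\circ\rho(x)$, which converts a double occurrence of $\delta_V$ into a single $\rho(\delta x)$; one more application of \eqref{eq rep 3} (with $\delta x$ in place of $x$) then moves the remaining $\delta_V^2$ past $\rho(x)$. Feeding the outcome through the same collection and antisymmetrisation as before produces $\rho(x)(\delta_V^2\rho(y))=\rho(y)(\delta_V^2\rho(x))$, and the reverse implication once again follows by retracing the computation.

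The main obstacle will be bookkeeping rather than ideas: one must keep track of whether $\delta_V$ sits to the left or to the right of each factor $\rho(x),\rho(y)$ before deciding which of \eqref{eq rep 1}--\eqref{eq rep 3} to apply, and avoid the trap of using an axiom and then its reverse, which would reintroduce the very term one wanted to remove. It is worth double-checking that \eqref{eq rep 2} is invoked with the correct orientation, since it is the only identity linking a bracket of $L$ with a product of operators on $V$, and a sign slip there would break the equivalence. Once the computation is arranged so that ``left-hand side minus right-hand side'' of each claimed identity equals one fixed expression in $\rho(x),\rho(y),\delta_V$ — an expression that vanishes precisely under the corresponding derivation, respectively Inv-derivation, condition — both directions of each $\Leftrightarrow$ fall out at once.
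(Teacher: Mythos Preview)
There is a genuine gap in your proposal, and it sits at the very first step: you misread what ``$\delta_V\in\mathrm{Der}(\mathrm{gl}(V))$'' means here. You take it to say that the inner derivation $\mathrm{ad}_{\delta_V}=[\delta_V,-]$ is a derivation of $\mathrm{gl}(V)$; but $\mathrm{ad}_{\delta_V}$ is \emph{always} a derivation of a commutator bracket, so under your reading the left-hand side of \eqref{eq Der gl 1} is a tautology and the equivalence would force $\rho(x)(\delta_V\rho(y))=\rho(y)(\delta_V\rho(x))$ to hold unconditionally, which it certainly does not. Consequently, ``writing out the Leibniz identity for $\mathrm{ad}_{\delta_V}$'' as you propose produces an identity that is true for free, and no amount of manipulation with \eqref{eq rep 1}--\eqref{eq rep 3} can extract a nontrivial equivalence from it. The same objection applies verbatim to your plan for \eqref{eq Der gl 2}.

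What the paper actually tests is whether the map $A\mapsto \delta_V\circ A$ (left composition by $\delta_V$, not the adjoint action) satisfies the Leibniz rule on the commutator bracket when evaluated at $A=\rho(x)$, $B=\rho(y)$. With that reading the computation is short: expand $\delta_V[\rho(x),\rho(y)]$ and $[\delta_V\rho(x),\rho(y)]+[\rho(x),\delta_V\rho(y)]$ using associativity, and use \eqref{eq rep 3} in the form $\delta_V\rho(z)=\rho(\delta z)+\rho(z)\delta_V$ to rewrite each $\delta_V\rho(\cdot)$; subtracting, everything cancels except $\rho(x)(\delta_V\rho(y))-\rho(y)(\delta_V\rho(x))$. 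Note that \eqref{eq rep 2} is never needed for the first equivalence, and \eqref{eq rep 1} enters only in the second one; your plan to route the argument through $\rho([x,y]_{\mathrm L})\circ\delta_V$ is an unnecessary detour caused by the initial misreading.
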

\begin{proof}
	Let $x,y \in L$. For the first identity, we have
	\begin{align*}
		\delta_V[\rho(x),\rho(y)]&=\delta_V \big(\rho(x)\rho(y)-\rho(y)\rho(x) \big) \\
		&=\delta_V\big(\rho(x)\rho(y)\big)-\delta_V\big(\rho(y)\rho(x)\big)\\
		&=(\delta_V\rho(x))\rho(y)-(\delta_V\rho(y))\rho(x) \\
		&=\big(\rho(\delta x)+\rho(x)\delta_V \big)\rho(y)-\big(\rho(\delta y)+\rho(y)\delta_V \big)\rho(x)\\
		&=\rho(\delta x)\rho(y)+(\rho(x)\delta_V)\rho(y)-\rho(\delta y)\rho(x)-(\rho(y)\delta_V)\rho(x).
	\end{align*}
	On the other side, we have
	\begin{align*}
		[\delta_V\rho(x),\rho(y)]&=(\delta_V\rho(x))\rho(y)-\rho(y)(\delta_V\rho(x)) \\
		&=\rho(\delta x)\rho(y)+(\rho(x)\delta_V)\rho(y)-\rho(y)\rho(\delta x)-\rho(y)(\rho(x)\delta_V).
		\end{align*}
Similarly, we have
\begin{align*}
		[\rho(x),\delta_V\rho(y)]&=\rho(x)\rho(\delta y)+\rho(x)(\rho(y)\delta_V)-\rho(\delta y)\rho(x)-(\rho(y)\delta_V)\rho(x).
	\end{align*}
	By computing the sum of the two last terms we obtain
	\begin{equation*}
		[\delta_V\rho(x),\rho(y)]+[\rho(x),\delta_V\rho(y)]=	\delta_V[\rho(x),\rho(y)]+\big(\rho(x)\rho(\delta y)+\rho(x)(\rho(y)\delta_V)-\rho(y)\rho(\delta x)-\rho(y)(\rho(x)\delta_V) \big).
	\end{equation*}
	Then $\delta_V\in \mathrm{Der(gl}(V))$ if and only if 
	\begin{equation*}
		\rho(x)\rho(\delta y)+\rho(x)(\rho(y)\delta_V)=\rho(y)\rho(\delta x)\rho(y)(\rho(x)\delta_V) .
	\end{equation*}
	So
	\begin{equation*}
		\delta_V \in \mathrm{Der(gl}(V)) \Leftrightarrow \rho(x)(\delta_V \rho(y))=\rho(y)(\delta_V \rho(x)).
	\end{equation*}
	For the second identity, let $x,y \in L$ and suppose that $\delta_V \in \mathrm{InvDer(gl(V))}$. Then, we have
	\begin{equation*}
		\delta_V^2[\rho(x),\rho(y)]=[\delta_V\rho(x),\delta_V\rho(y)].
	\end{equation*}
	By linearity of $\delta_V$ and with simple computing we obtain 
	\begin{equation*}
		[\delta_V\rho(x),\delta_V\rho(y)]=\delta_V^2[\rho(x),\rho(y)]+\rho(x)(\delta_V^2\rho(y))-\rho(y)(\delta_V^2\rho(x)),
	\end{equation*}
which means 
	\begin{equation*}
		\rho(x)(\delta_V^2\rho(y))=\rho(y)(\delta_V^2\rho(x)).
	\end{equation*}
	For the opposite sense it is easy to verify. This completes the proof.
	
\end{proof}
\begin{prop}
	Let $(V;\rho,\delta_V)$ be a representation of the InvDer Lie algebra $(\mathfrak{L},\delta)$ and suppose that $\delta_V \in \mathrm{InvDer(gl}(V))$. Then for all $x,y \in L$, $(\mathrm{gl}(V),[\cdot,\cdot],\delta_V)$ is an InvDer-Lie algebra. 
\end{prop}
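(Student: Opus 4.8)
The plan is to recognize that the only genuine content is to exhibit $\delta_V$ as an Inv-derivation of the Lie algebra $(\mathrm{gl}(V),[\cdot,\cdot])$, after which everything follows from the structural results of Section~2. (The clause ``for all $x,y\in L$'' in the statement is vestigial, since no element of $L$ appears in the conclusion.) First I would record the underlying object: $(\mathrm{gl}(V),[\cdot,\cdot])$, with the commutator bracket $[A,B]=A\circ B-B\circ A$, is a Lie algebra in the usual way. Then I would unpack the hypothesis: by the very definition of the set $\mathrm{InvDer}(\mathrm{gl}(V))$, the assumption $\delta_V\in\mathrm{InvDer}(\mathrm{gl}(V))$ says precisely that $\delta_V$ is a derivation of $(\mathrm{gl}(V),[\cdot,\cdot])$, that $\delta_V$ is invertible — which is in any case already part of the data of the representation $(V;\rho,\delta_V)$ — and that $[\delta_V A,\delta_V B]=\delta_V^2[A,B]$ for all $A,B\in\mathrm{gl}(V)$; i.e. $\delta_V$ is an Inv-derivation of $\mathrm{gl}(V)$ in the sense of Section~2.

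Next I would feed the pair $(\mathrm{gl}(V),\delta_V)$ into the Section~2 machinery. Since $\delta_V$ is an Inv-derivation, the twisted bracket $[\cdot,\cdot]_{\delta_V}=\delta_V\circ[\cdot,\cdot]$ makes $\mathrm{gl}(V)$ into a Lie algebra, so in particular its Jacobi identity $\circlearrowleft_{A,B,C}[A,[B,C]_{\delta_V}]_{\delta_V}=0$ holds; using $\ker(\delta_V)=0$ as in Remark~\ref{remark-2.5} this gives $\circlearrowleft_{A,B,C}[A,[B,C]_{\delta_V}]=0$, and Proposition~\ref{prop-2.4} applied to $\mathrm{gl}(V)$ then yields the InvDer-Jacobi identity $\circlearrowleft_{A,B,C}[\delta_V A,[B,C]]=0$. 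Consequently $(\mathrm{gl}(V),[\cdot,\cdot],\delta_V)$ satisfies Definition~\ref{defn-twisted}, hence is an InvDer Lie algebra; equivalently, and more directly, this is immediate from the reformulated definition, since an InvDer Lie algebra is nothing but a Lie algebra equipped with an Inv-derivation, which is exactly what we have.

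I expect no real obstacle; the one point worth flagging is that a naive attempt to verify the InvDer-Jacobi identity directly for the commutator bracket — distributing $\delta_V$ across $\circlearrowleft_{A,B,C}[A,[B,C]]=0$ via the Leibniz rule and regrouping by the ordinary Jacobi identity — collapses to the tautology $0=0$ and proves nothing. Thus the argument genuinely has to route through the twisted bracket (Proposition~\ref{prop-2.4} together with Remark~\ref{remark-2.5}), or else simply invoke the reformulated definition of InvDer Lie algebra. Note also that the representation map $\rho$ itself is not used anywhere beyond guaranteeing that $\delta_V$ is invertible, so the statement is really a statement about $\mathrm{gl}(V)$ and $\delta_V$ alone.
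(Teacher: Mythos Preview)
Your proposal is correct and is in fact cleaner than the paper's own argument. You recognize that the hypothesis $\delta_V\in\mathrm{InvDer}(\mathrm{gl}(V))$ is, via the reformulated definition following Remark~\ref{remark-2.5}, literally the conclusion; equivalently you route through Proposition~\ref{prop-2.4} and Remark~\ref{remark-2.5} to recover the (redundant) InvDer-Jacobi identity for arbitrary $A,B,C\in\mathrm{gl}(V)$.

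The paper proceeds differently: it verifies the InvDer-Jacobi identity by a direct computation, but only on triples of the form $\rho(x),\rho(y),\rho(z)$ with $x,y,z\in L$, using the identity \eqref{eq Der gl 1} from the preceding proposition together with the Leibniz rule. It expands $[\delta_V\rho(x),[\rho(y),\rho(z)]]$, writes it as $\delta_V[\rho(x),[\rho(y),\rho(z)]]$ plus correction terms, and checks that the corrections cancel upon cyclic summation. This explains the otherwise puzzling clause ``for all $x,y\in L$'' in the statement: the paper is really checking the identity on the image of $\rho$, not on all of $\mathrm{gl}(V)$. Your argument thus proves strictly more --- the identity on the whole of $\mathrm{gl}(V)$ --- with no computation, and makes transparent that $\rho$ is irrelevant. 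The paper's computation, by contrast, exhibits concretely how the representation axioms feed into the derivation condition, which has some expository value even though it yields a weaker conclusion as written.
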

\begin{proof}
	Let $x,y,z \in L$. It is sufficient to prove the InvDer-Jacobi identity. Using \eqref{eq Der gl 1} and the fact that $(\mathrm{gl}(V),[\cdot,\cdot])$ is a Lie algebra, we obtain
	\begin{align*}
		[\delta_V\rho(x),[\rho(y),\rho(z)]]&=\delta_V\rho(x)(\rho(y)\rho(z))-\delta_V\rho(x)(\rho(z)\rho(y))-(\rho(y)\rho(z))\delta_V\rho(x)+(\rho(z)\rho(y))\delta_V\rho(x) \\
		&=\delta_V[\rho(x),[\rho(y),\rho(z)]]-(\rho(y)\rho(x))\delta_V\rho(z)+(\rho(z)\rho(x))\delta_V\rho(y).
	\end{align*}
Similarly, we have 
    \begin{align*}
		[\delta_V\rho(y),[\rho(z),\rho(x)]]&=\delta_V[\rho(y),[\rho(z),\rho(x)]]-(\rho(z)\rho(x))\delta_V\rho(y)+(\rho(x)\rho(y))\delta_V\rho(z), \\
		[\delta_V\rho(z),[\rho(x),\rho(y)]]&=\delta_V[\rho(z),[\rho(x),\rho(y)]]-(\rho(x)\rho(y))\delta_V\rho(z)+(\rho(y)\rho(x))\delta_V\rho(z).
	\end{align*}
With simple computing we obtain that 
	$\circlearrowleft_{(\rho(x),\rho(y),\rho(z))}[\delta_V\rho(x),[\rho(y),\rho(z)]]=0$. This completes the proof.
\end{proof}
\begin{ex}
	Let $(L;\mathrm{ad},\delta)$ be the adjoint representation of the InvDer Lie algebra $(\mathfrak{L},\delta)$. 
	Then $(\mathrm{gl(L),ad},\delta)$ is an InvDer Lie algebra.
\end{ex}
Next we are in position to define \textbf{the semidirect product of InvDer Lie algebra} $(\mathfrak{L},\delta)$ and a representation of it.
\begin{prop}
	Let $(V;\rho,\delta_V)$ be a representation of the InvDer Lie algebra $(\mathfrak{L},\delta)$. Define an anti-symmetric bilinear map 
	\begin{align*}
		[-,-]_{\ltimes}&:\wedge^2(L\oplus V) \rightarrow L\oplus V,
	\end{align*}
	by
	\begin{align*}
		[x+a,y+b]_{\ltimes}&=[x,y]_\mathrm{L}+\rho(x)(b)-\rho(y)(a) .
	\end{align*}
	We define also
	\begin{align*}
		\delta\oplus\delta_V&:L\oplus V \rightarrow L\oplus V,
	\end{align*}
	by
	\begin{align*}
		(\delta\oplus\delta_V)(x+a)&=\delta x+\delta_Va.
	\end{align*}
	Then $(L\oplus V,[-,-]_{\ltimes},\delta\oplus\delta_V)$ is an InvDer Lie algebra.
\end{prop}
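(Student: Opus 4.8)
The plan is to exploit the simplified description of an InvDer Lie algebra as a Lie algebra equipped with an Inv-derivation. Write $\Delta := \delta\oplus\delta_V$ and denote a generic element of $L\oplus V$ by $X = x+a$ with $x\in L$ and $a\in V$. Then three things must be verified: that $[-,-]_\ltimes$ satisfies the Jacobi identity (its anti-symmetry is immediate, since $[-,-]_\mathrm{L}$ is anti-symmetric and the mixed terms interchange sign under $X\leftrightarrow Y$); that $\Delta$ is an invertible linear endomorphism and a derivation of $(L\oplus V,[-,-]_\ltimes)$; and that $\Delta$ obeys the Inv-derivation identity $[\Delta X,\Delta Y]_\ltimes = \Delta^2[X,Y]_\ltimes$.

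First I would establish the Jacobi identity, which is the main step. Expanding $\circlearrowleft_{X,Y,Z}[X,[Y,Z]_\ltimes]_\ltimes$ for $X=x+a$, $Y=y+b$, $Z=z+c$ and projecting onto the two direct summands, the $L$-component is precisely $\circlearrowleft_{x,y,z}[x,[y,z]_\mathrm{L}]_\mathrm{L}$, which vanishes by the Jacobi identity of $\mathfrak{L}$. The $V$-component is a sum of six terms; collecting them according to which of the vectors $a,b,c$ they are applied to reduces its vanishing to the compatibility of $\rho$ with the brackets, that is, to \eqref{eq rep 2} (together with the invertibility of $\delta_V$ and, where needed, \eqref{eq rep 3}). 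This bookkeeping is the one genuinely laborious point; the remaining verifications are short.

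Next, $\Delta=\delta\oplus\delta_V$ is invertible with inverse $\delta^{-1}\oplus\delta_V^{-1}$ because $\delta$ and $\delta_V$ are invertible. For the derivation property I would split $\Delta[X,Y]_\ltimes - [\Delta X,Y]_\ltimes - [X,\Delta Y]_\ltimes$ into its $L$- and $V$-parts: the $L$-part vanishes since $\delta$ is a derivation of $[-,-]_\mathrm{L}$, and in the $V$-part the coefficient of $b$ is $\delta_V\rho(x)-\rho(\delta x)-\rho(x)\delta_V$ (and symmetrically the coefficient of $a$), which is $0$ by \eqref{eq rep 3}. Finally, for the Inv-derivation identity the $L$-part is $[\delta x,\delta y]_\mathrm{L}=\delta^2[x,y]_\mathrm{L}$, valid since $\delta\in\mathrm{InvDer}(L)$, while the $V$-part has coefficient of $b$ equal to $\rho(\delta x)\circ\delta_V-\delta_V^2\circ\rho(x)$ (and symmetrically for $a$), which vanishes by \eqref{eq rep 1}. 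Assembling these facts yields that $(L\oplus V,[-,-]_\ltimes,\delta\oplus\delta_V)$ is an InvDer Lie algebra.
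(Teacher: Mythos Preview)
Your outline parallels the paper's proof in two of the three verification steps (the derivation property via \eqref{eq rep 3} and the Inv-derivation identity via \eqref{eq rep 1}); for those your arguments are essentially identical to the paper's. The one place you diverge is the Jacobi-type condition: you verify the \emph{ordinary} Jacobi identity for $[-,-]_\ltimes$ and invoke the paper's remark that the InvDer-Jacobi identity is then redundant, whereas the paper verifies the \emph{InvDer-Jacobi} identity $\circlearrowleft[(\delta\oplus\delta_V)X,[Y,Z]_\ltimes]_\ltimes=0$ directly.

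This difference is not cosmetic. For the InvDer-Jacobi identity, the $V$-coefficient of $a$ is $\rho(\delta y)\rho(z)-\rho(\delta z)\rho(y)-\rho([y,z]_\mathrm{L})\circ\delta_V$, which vanishes \emph{immediately} by \eqref{eq rep 2}. For the ordinary Jacobi identity, the $V$-coefficient of $a$ is $[\rho(y),\rho(z)]-\rho([y,z]_\mathrm{L})$, and vanishing here is precisely the statement that $\rho$ is a Lie algebra homomorphism --- which is \emph{not} what \eqref{eq rep 2} says. That identity can indeed be extracted from the representation axioms, but it requires \eqref{eq rep 1} in addition to \eqref{eq rep 2} and \eqref{eq rep 3}; the quickest route is in fact the indirect one (first obtain InvDer-Jacobi via \eqref{eq rep 2}, then combine it with the Inv-derivation identity and invertibility of $\delta\oplus\delta_V$ to conclude that $(\delta\oplus\delta_V)^4$ annihilates the ordinary Jacobiator). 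So your claim that the $V$-part ``reduces to \eqref{eq rep 2} together with invertibility of $\delta_V$ and, where needed, \eqref{eq rep 3}'' understates the work required. Either add \eqref{eq rep 1} and carry out the derivation of $\rho([x,y]_\mathrm{L})=[\rho(x),\rho(y)]$ explicitly, or --- more economically --- follow the paper and check InvDer-Jacobi instead.
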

\begin{proof}
	Let $x,y,z \in L$ and $a,b,c \in V$. The proof is organized in three steps as follow. The first step is dedicated to prove that $\delta\oplus\delta_V \in \mathrm{Der}(L\oplus V)$. We get
	\begin{align*}
		(\delta\oplus\delta_V)[x+a,y+b]_{\ltimes}&=(\delta\oplus\delta_V)([x,y]_L+\rho(x)(b)-\rho(y)(a)) \\
		&=\delta[x,y]_L+\delta_V\rho(x)(b)-\delta_V\rho(y)a   \\
		&\overset{\eqref{eq rep 3}}{=}[\delta x,y]_L+[x,\delta y]_L+\rho(\delta x)(b)+\rho(x)\delta_Vb-\rho(\delta y)(a)-\rho(y)\delta_Va  \\
		&=\Big([\delta x,y]_L+\rho(\delta x)(b)-\rho(y)\delta_Va \Big)+\Big([x,\delta y]_L+\rho(x)\delta_Vb-\rho(\delta y)(a) \Big) \\
		&=[(\delta\oplus\delta_V)(x+a),y+b]_{\ltimes}+[x+a,(\delta\oplus\delta_V)(y+b)]_{\ltimes}.
	\end{align*}
	It means that $\delta\oplus\delta_V \in \mathrm{Der}(L\oplus V)$. In the second step we prove that $\delta\oplus\delta_V \in \mathrm{InvDer}(L\oplus V)$.
	\begin{align*}
		(\delta\oplus\delta_V)^2[x+a,y+b]_\ltimes&=(\delta\oplus\delta_V)^2\Big([x,y]_L+\rho(x)(b)-\rho(y)(a) \Big) \\
		&=\delta^2[x,y]_L+\delta_V^2\rho(x)(b)-\delta_V^2(y)(a)\\
		&\overset{\eqref{eq rep 1}}{=}[\delta x,\delta y]_L+\rho(\delta x)\delta_Vb-\rho(\delta y)\delta_Va \\
		&=[\delta x+\delta_Va,\delta y+\delta_Vb]_{\ltimes} \\
		&=[(\delta\oplus\delta_V)(x+a),(\delta\oplus\delta_V)(y+b)]_{\ltimes}.
	\end{align*}
	In the last step, we prove the InvDer-Jacobi identity
	\begin{align*}
		[(\delta\oplus\delta_V)(x+a),[(y+b),(z+c)]_{\ltimes}]_{\ltimes}&=[(\delta x+\delta_V a),[x,y]_L+\rho(y)(c)-\rho(z)(b)]_{\ltimes} \\
		&=[\delta x,[y,z]_L]_L+\rho(\delta x)(\rho(y)(c)-\rho(z)(b))-\rho([y,z]_L)\delta_Va.
	\end{align*}
	Similarly, we obtain
	\begin{equation*}
		[(\delta\oplus\delta_V)(y+b),[z+c,x+a]_{\ltimes}]_{\ltimes}=[\delta y,[z,x]_L]_L+\rho(\delta y)(\rho(z)(a)-\rho(x)(c))-\rho([z,x]_L)\delta_Vb,
	\end{equation*}
	\begin{equation*}
		[(\delta\oplus\delta_V)(z+c),[x+a,y+b]_{\ltimes}]_{\ltimes}=[\delta z,[x,y]_L]_L+\rho(\delta z)(\rho(x)(b)-\rho(y)(b))-\rho([x,y]_L)\delta_Vc,
	\end{equation*}
	which leads us to 
	\begin{align*}
		\circlearrowleft_{(x,a),(y,b),(z,c)}[(\delta+\delta_V)(x,a),[(y,b),(z,c)]_{\ltimes}]_{\ltimes}&=\Big(\circlearrowleft_{x,y,z}[\delta x,[y,z]_L]_L+\rho(\delta x)\rho(y)(c) -\rho(\delta x)\rho(z)(b)\\
		&\ \ \ -\rho[y,z]_L\delta_Va+\rho(\delta y)(\rho(z)(a)-\rho(\delta y)\rho(x)(c)-\rho[z,x]_L\delta_Vb \\
		&\ \ \ +\rho(\delta z)(\rho(x)(b)-\rho(\delta z)\rho(y)(a)-\rho[x,y]_L\delta_Va \Big) \\
		&=0.
	\end{align*}
	This completes the proof.
\end{proof}
\begin{defn}
	A linear map $D:L \rightarrow L$ is called \textbf{$\delta$-derivation of the InvDer Lie algebra} $(L,[-,-]_\mathrm{L},\delta)$ if 
	\begin{align}
		D\circ \delta&=\delta^2 \circ D,\label{eq3.7} \\
		D[x,y]_\mathrm{L}&=[Dx,\delta y]_\mathrm{L}+[\delta x,Dy]_\mathrm{L}, \ \ \ \forall x,y \in L.  \label{eq3.8}
	\end{align}
	Denote the set of all $\delta$-derivation of the InvDer Lie algebra $(L,[-,-]_\mathrm{L},\delta)$ by $\mathrm{Der}_{\delta}(L)$.
\end{defn}

\begin{prop}
	For any $x\in L$ such that $\delta (x)=x$, the linear map 
	\begin{equation*}
	\left\{
	\begin{array}{cc}
	&\hspace{-2cm}\mathrm{ad}_x:L \rightarrow L,\\
	&\hspace{-.2cm}y\mapsto \mathrm{ad}_x(y)=[y,x]_\mathrm{L},
	\end{array}
	\right.
	\end{equation*}
	is a $\delta$-derivation of the InvDer Lie algebra $(\mathfrak{L},\delta)$. \\
\end{prop}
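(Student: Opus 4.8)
The plan is to check the two defining identities \eqref{eq3.7} and \eqref{eq3.8} of a $\delta$-derivation for the map $\mathrm{ad}_x$, using the hypothesis $\delta(x)=x$ in an essential way in both steps.

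For \eqref{eq3.7}, I would start from $\mathrm{ad}_x(\delta y)=[\delta y,x]_\mathrm{L}$ and rewrite $x=\delta x$, so that the expression becomes $[\delta y,\delta x]_\mathrm{L}$. Applying the Inv-derivation relation \eqref{Invderivation} gives $[\delta y,\delta x]_\mathrm{L}=\delta^2[y,x]_\mathrm{L}=\delta^2(\mathrm{ad}_x y)$, and hence $\mathrm{ad}_x\circ\delta=\delta^2\circ\mathrm{ad}_x$, which is \eqref{eq3.7}.

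For \eqref{eq3.8}, the key tool is the InvDer-Jacobi identity \eqref{Inv Jacobi}. Writing out the cyclic sum for the triple $y,z,x$ and using $\delta x=x$ to simplify the term in which the derivation falls on $x$, one obtains
$$[x,[y,z]_\mathrm{L}]_\mathrm{L}+[\delta y,[z,x]_\mathrm{L}]_\mathrm{L}+[\delta z,[x,y]_\mathrm{L}]_\mathrm{L}=0.$$
Now $\mathrm{ad}_x[y,z]_\mathrm{L}=[[y,z]_\mathrm{L},x]_\mathrm{L}=-[x,[y,z]_\mathrm{L}]_\mathrm{L}$, and solving the displayed identity for this quantity, then applying antisymmetry of $[-,-]_\mathrm{L}$ to the term $-[\delta z,[x,y]_\mathrm{L}]_\mathrm{L}=-[\delta z,[y,x]_\mathrm{L}]_\mathrm{L}^{\,\leftarrow}$ in the appropriate order, rewrites the right-hand side as $[\delta y,[z,x]_\mathrm{L}]_\mathrm{L}+[[y,x]_\mathrm{L},\delta z]_\mathrm{L}$, i.e.\ as $[\delta y,\mathrm{ad}_x z]_\mathrm{L}+[\mathrm{ad}_x y,\delta z]_\mathrm{L}$. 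This is exactly \eqref{eq3.8}.

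Both verifications are essentially one-line manipulations, so I do not expect a genuine obstacle; the only points needing care are (i) that it is $\delta x=x$, rather than merely $x$ being some distinguished vector, that allows the first slot of \eqref{Invderivation} and of \eqref{Inv Jacobi} to be collapsed, and (ii) the bookkeeping of bracket arguments and signs when converting the cyclic InvDer-Jacobi identity into the Leibniz-type form. One may also remark that this statement is the InvDer analogue of the classical fact that inner derivations of a Lie algebra are derivations.
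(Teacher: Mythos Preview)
Your proof is correct and follows essentially the same route as the paper's: for \eqref{eq3.7} you replace $x$ by $\delta x$ and invoke \eqref{Invderivation}, and for \eqref{eq3.8} you specialise the InvDer--Jacobi identity \eqref{Inv Jacobi} to the triple with $\delta x=x$ and rearrange using antisymmetry. The only blemish is a sign slip in your displayed aside ``$-[\delta z,[x,y]_\mathrm{L}]_\mathrm{L}=-[\delta z,[y,x]_\mathrm{L}]_\mathrm{L}$'' (the two sides differ by a sign; you mean $[\delta z,[x,y]_\mathrm{L}]_\mathrm{L}=-[\delta z,[y,x]_\mathrm{L}]_\mathrm{L}=[[y,x]_\mathrm{L},\delta z]_\mathrm{L}$), but your conclusion and the overall computation are correct.
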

\begin{proof}
	Let $x,y,z\in L$ such that $\delta(x)=x$. Then, we have 
	\begin{align*}
		\mathrm{ad_x}(\delta y)&=[\delta y,x]_\mathrm{L}=[\delta y,\delta x]_\mathrm{L}=\delta^2[y,x]_\mathrm{L}=\delta^2\circ \mathrm{ad_x}(y).
	\end{align*}
	For the second assertion, we have 
	\begin{align*}
		\mathrm{ad_x}([y,z]_\mathrm{L})&=[[y,z]_\mathrm{L},\delta x]_\mathrm{L}=[[y,x]_\mathrm{L},\delta z]_\mathrm{L}+[\delta y,[z,x]_\mathrm{L}]_\mathrm{L}=[\mathrm{ad_x}(y);\delta z]_\mathrm{L}+[\delta y,\mathrm{ad_x}(z)]_\mathrm{L}.
	\end{align*}
	This means that $\mathrm{ad_x}$ is a $\delta$-derivation on $(\mathfrak{L},\delta)$.
\end{proof}
\section{Cohomology of InvDer Lie algebras}
Since the InvDer Lie algebraic structure has been obtained by twisting a Lie algebra $(\mathrm{L,[-,-]_L})$ by an Inv-derivation $\delta$ and inspired by the cohomology for Hom-Lie algebra of Lie-Type (see \cite{H0} section 2.1) we are, in this section, studying its cohomology similar to the context of the InvDer Lie algebraic structure.

First recall that the Chevalley-Eilenberg cohomology of the Lie algebra $\mathfrak{L}$ with coefficients in a representation $\mathcal{V}$ is the cohomology of the cochain complex $C^n(L;V)=\mathrm{Hom}(\wedge^nL,V)$ with the coboundary operator $\partial_\mathrm{CE}:C^n(L;V)\rightarrow C^{n+1}(L;V)$ defined by 
\begin{align*}
	\delta_\mathrm{CE}^n(f)(x_1,\cdots,x_{n+1})
	=&\displaystyle\sum_{i=1}^{n+1}(-1)^{i+1}\rho(x_i)f(x_1,\cdots,\hat{a}_i,\cdots,x_{n+1})\\
	&+\displaystyle\sum_{1\leq i<j\leq n+1}(-1)^{i+j}f([x_i,x_j]_\mathrm{L},x_1,\cdots,\hat{x}_i,\cdots,\hat{x}_j,\cdots,x_{n+1}).
\end{align*}
Let $\mathfrak{L}$ be a Lie algebra and $\mathcal{V}$ be a representation of it, let $\delta:L\rightarrow L$ be an Inv-derivation and $\delta_\mathrm{V}:V\rightarrow V$ an invertible linear map satisfying the equation \eqref{eq rep 1}. In this section we consider only $n\in\{1,2\}$
and we define the set of InvDer Lie algebra $n$-cochains by 
\begin{align*}
	\mathfrak{C}^1_\mathrm{InvDer}(L;V)&:=C^1(L;V)=\mathrm{Hom}(L,V),\\
	\mathfrak{C}^2_\mathrm{InvDer}(L;V)&:=(C^2(L;V)\oplus C^1(L;V))\oplus C^1(L;V)=(\mathrm{Hom}(\wedge^2L,V)\oplus\mathrm{Hom}(L,V))\oplus\mathrm{Hom}(L,V).
\end{align*}
We define the following operators:
\begin{equation*}
\hspace{-3.7cm}\left\{
\begin{array}{cc}
	&\Delta^1:C^1(L;V)\rightarrow C^1(L;V),\\
	&\hspace{-1cm}\Delta^1(f)=f\circ\delta-\delta_\mathrm{V}\circ f,
	\end{array}
	\right.
\left\{
\begin{array}{cc}
&\hspace{-1.5cm}\Delta^2:C^2(L;V)\rightarrow C^2(L;V),\\
&\Delta^2(f)=f(\delta\otimes \mathrm{Id})+f(\mathrm{Id}\otimes \delta)-\delta_\mathrm{V}\circ f,
\end{array}
\right.
\end{equation*}
\begin{equation*}
\left\{
\begin{array}{cc}
&\hspace{-1cm}\Delta^2_\mathrm{Inv}:C^2(L;V)\rightarrow C^2(L;V),\\
&\Delta_\mathrm{Inv}^2(f):=f(\delta\otimes\delta)-\delta_\mathrm{V}^2\circ f,
\end{array}
\right.
\left\{
\begin{array}{cc}
	&\hspace{-5cm}\phi^1:C^1(L;V)\rightarrow C^2(L;V),\\
	&\hspace{-0.5cm}\phi^1(f)(x,y):=\delta_\mathrm{V}f([x,y]_\mathrm{L})+f(\delta[x,y]_\mathrm{L})-\rho(\delta x)f(y)+\rho(\delta y)f(x).
	\end{array}
	\right.
\end{equation*}
We define also
\begin{equation*}
\hspace{-3cm}\left\{
\begin{array}{cc}
&\mathfrak{D}^1_\mathrm{InvDer}:\mathfrak{C}^1_\mathrm{InvDer}(L;V)\rightarrow \mathfrak{C}^2_\mathrm{InvDer}(L;V),\\
&\mathfrak{D}^1_\mathrm{InvDer}(f):=(\partial^1_\mathrm{CE}(f),-\Delta^1(f),-\Delta^1_\mathrm{Inv}(f)),
\end{array}
\right.
\end{equation*}
\begin{equation*}
\left\{
\begin{array}{cc}
&\hspace{-2cm}\mathfrak{D}^2_\mathrm{InvDer}:\mathfrak{C}^2_\mathrm{InvDer}(L;V)\rightarrow \mathfrak{C}^3_\mathrm{InvDer}(L;V),\\
&\mathfrak{D}^2_\mathrm{InvDer}(f,g,h):=(\partial^2_\mathrm{CE}(f),\partial^1_\mathrm{CE}(g)+\Delta^2(f),\phi^1(h)-\Delta^2_\mathrm{Inv}(f)).
\end{array}
\right.
\end{equation*}
\begin{defn}
	The space of $1$-cocycle of the InvDer Lie algebra $(\mathfrak{L},\delta)$ is defined
	\begin{equation*}
		\mathcal{H}^1_\mathrm{InvDer}(L;V):=\{f\in\mathfrak{C}^1_\mathrm{InvDer}(L;V);\quad \mathfrak{D}^1_\mathrm{InvDer}(f)=0\}.
	\end{equation*}
Also, the space of $2$-coboundaries of the InvDer Lie algebra $(\mathfrak{L},\delta)$ is 
\begin{equation*}
	\mathcal{B}^2_\mathrm{InvDer}(L;V):=\{(f,g,h)\in\mathfrak{C}^2_\mathrm{InvDer}(L;V);\quad \mathfrak{D}^1_\mathrm{InvDer}(\gamma)=(f,g,h) \text{ where } \gamma\in\mathfrak{C}^1_\mathrm{InvDer}(L;V)\}.
\end{equation*}
Moreover, the space of $2$-cocycles of the InvDer Lie algebra $(\mathfrak{L},\delta)$ is 
\begin{equation*}
	\mathcal{Z}^2_\mathrm{InvDer}(L;V):=\{(f,g,h)\in\mathfrak{C}^2_\mathrm{InvDer}(L;V);\quad \mathfrak{D}^2_\mathrm{InvDer}(f,g,h)=0\}.
\end{equation*}
\end{defn}
We need the following result in the proof of theorem \ref{thm coboundary}.
\begin{prop}
	For $f\in\mathfrak{C}^1_\mathrm{InvDer}(L;V)$, we have 
	\begin{equation}\label{derivation coboundary1}
		(\phi^1\circ\Delta^1_\mathrm{Inv}+\Delta^2_\mathrm{Inv}\circ \partial^1_\mathrm{CE})(f)=0.
	\end{equation}
\end{prop}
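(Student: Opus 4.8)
The plan is to verify the identity by evaluating both composite maps on an arbitrary cochain $f\in\mathfrak{C}^1_{\mathrm{InvDer}}(L;V)=\mathrm{Hom}(L,V)$ and then on an arbitrary pair $x,y\in L$: since $\phi^1(\Delta^1_{\mathrm{Inv}}(f))$ and $\Delta^2_{\mathrm{Inv}}(\partial^1_{\mathrm{CE}}(f))$ both lie in $C^2(L;V)=\mathrm{Hom}(\wedge^2L,V)$, it is enough to show that their sum sends every $(x,y)$ to $0\in V$. First I would expand $\Delta^2_{\mathrm{Inv}}(\partial^1_{\mathrm{CE}}(f))(x,y)=\partial^1_{\mathrm{CE}}(f)(\delta x,\delta y)-\delta_V^2\big(\partial^1_{\mathrm{CE}}(f)(x,y)\big)$, inserting the Chevalley--Eilenberg formula $\partial^1_{\mathrm{CE}}(f)(a,b)=\rho(a)f(b)-\rho(b)f(a)-f([a,b]_{\mathrm{L}})$ in both slots. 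The decisive move here is to replace $f([\delta x,\delta y]_{\mathrm{L}})$ by $f(\delta^2[x,y]_{\mathrm{L}})$ via the Inv-derivation relation \eqref{Invderivation}; this is what converts the ``doubled'' twist built into $\Delta^2_{\mathrm{Inv}}$ into a single $\delta^2$ inside $f$.

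Next I would expand $\phi^1(\Delta^1_{\mathrm{Inv}}(f))(x,y)$ directly from the definitions of $\phi^1$ and of $\Delta^1_{\mathrm{Inv}}$, and then sort every term of the combined expression $\phi^1(\Delta^1_{\mathrm{Inv}}(f))(x,y)+\Delta^2_{\mathrm{Inv}}(\partial^1_{\mathrm{CE}}(f))(x,y)$ into three families according to its outermost operator: (i) terms of the shape $\rho(\delta x)(\,\cdot\,)$ whose argument is built from $f$ and $y$; (ii) the mirror family of terms $\rho(\delta y)(\,\cdot\,)$ built from $f$ and $x$; and (iii) terms of the shape $\delta_V^{k}\,f(\delta^{\ell}[x,y]_{\mathrm{L}})$. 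In families (i) and (ii) the terms carrying a prefactor $\delta_V^2\circ\rho(x)$ (inherited from the $\delta_V^2$ in $\Delta^2_{\mathrm{Inv}}$) are rewritten, using the representation axiom \eqref{eq rep 1} in the form $\delta_V^2\circ\rho(x)=\rho(\delta x)\circ\delta_V$, so that the whole of family (i) becomes $\rho(\delta x)$ applied to an expression that vanishes, and likewise for family (ii); any term that survives this step is cleared using \eqref{eq rep 3}. Family (iii) cancels in pairs by the $\mathbb{K}$-linearity of $f$, $\delta$ and $\delta_V$ together with the way $\delta$ and $\delta_V$ already enter $\Delta^1_{\mathrm{Inv}}$ and $\Delta^2_{\mathrm{Inv}}$. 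As each of the three families is zero, the total is zero, which is \eqref{derivation coboundary1}.

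The step I expect to be the actual work is the bookkeeping in the second paragraph: $\rho(\delta x)$, $\rho(\delta y)$, $\delta_V$, $\delta$ and $f$ are pairwise non-commuting linear maps, so \eqref{eq rep 1} and \eqref{eq rep 3} must be applied with care about which side of $\delta_V$ one sits on, and the signs coming from $\partial^1_{\mathrm{CE}}$ and from the two-argument setting (no cyclic sum here) have to be tracked exactly. Conceptually the cancellation is forced: the single $\delta^2$ that \eqref{Invderivation} extracts from $[\delta x,\delta y]_{\mathrm{L}}$, the $\delta^2$ and $\delta_V^2$ built into $\Delta^1_{\mathrm{Inv}}$ and $\Delta^2_{\mathrm{Inv}}$, and the $\delta_V$-shift supplied by \eqref{eq rep 1} are mutually calibrated, so once the terms are grouped as above there is nothing left over. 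This identity is precisely the vanishing of the third component of $\mathfrak{D}^2_{\mathrm{InvDer}}\circ\mathfrak{D}^1_{\mathrm{InvDer}}$, which is why it is needed in Theorem \ref{thm coboundary}.
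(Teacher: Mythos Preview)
Your proposal is correct and follows essentially the same route as the paper: expand both $\Delta^2_{\mathrm{Inv}}(\partial^1_{\mathrm{CE}}(f))(x,y)$ and $\phi^1(\Delta^1_{\mathrm{Inv}}(f))(x,y)$ directly from the definitions, use the Inv-derivation identity \eqref{Invderivation} to turn $f([\delta x,\delta y]_{\mathrm{L}})$ into $f(\delta^2[x,y]_{\mathrm{L}})$, and then cancel using \eqref{eq rep 1}. One small remark: once you apply \eqref{eq rep 1} to convert $\delta_V^2\circ\rho(x)$ into $\rho(\delta x)\circ\delta_V$ (and similarly for $y$), every family vanishes outright and \eqref{eq rep 3} is never needed.
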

\begin{proof}
Let	$f\in\mathfrak{C}^1_\mathrm{InvDer}(L;V)$ and $x,y\in L$. Using the equation \eqref{eq rep 1} we obtain 
	\begin{align*}
		&(\Delta^2_\mathrm{Inv}\circ\partial^1_\mathrm{CE})(f)(x,y)\\
		=&\partial^1_\mathrm{CE}(f)(\delta x,\delta y)-\delta^2_V\circ\partial^1_\mathrm{CE}(f)(x,y)\\
		=&\rho(\delta x)f(\delta y)-\rho(\delta y)f(\delta x)-f(\delta^2[x,y]_\mathrm{L})\\
		&-\delta^2_V\rho(x)f(y)+\delta_V^2\rho(y)f(x)+\delta_V^2f([x,y]_\mathrm{L}).	
	\end{align*}
We get also
	\begin{align*}
		&(\phi^1\circ\Delta^1_\mathrm{Inv})(f)(x,y)\\
		=&\delta_V\circ\Delta^1_\mathrm{Inv}f([x,y]_\mathrm{L})+\Delta^1_\mathrm{Inv}f(\delta[x,y]_\mathrm{L})-\rho(\delta x)\Delta^1_\mathrm{Inv}(f)(y)+\rho(\delta y)\Delta^1_\mathrm{Inv}(f)(x)\\
		=&\delta_Vf(\delta[x,y]_\mathrm{L})-\delta_V^2f([x,y]_\mathrm{L})+f(\delta^2[x,y]_\mathrm{L})-\delta_Vf(\delta[x,y]_\mathrm{L})\\
		&-\rho(\delta x)f(\delta y)+\rho(\delta x)\delta_Vf(y)+\rho(\delta y)f(\delta x)-\rho(\delta y)\delta_Vf(x).
	\end{align*}
	Then 
	\begin{align*}
		&(\Delta^2_\mathrm{Inv}\circ\partial_\mathrm{CE}+\phi^1\circ\Delta^1_\mathrm{Inv})(f)(x,y)\\
		=&\rho(\delta x)f(\delta y)-\rho(\delta y)f(\delta x)-f(\delta^2[x,y]_\mathrm{L})\\
		&-\delta^2_V\rho(x)f(y)+\delta_V^2\rho(y)f(x)+\delta_V^2f([x,y]_\mathrm{L})\\
		&+\delta_Vf(\delta[x,y]_\mathrm{L})-\delta_V^2f([x,y]_\mathrm{L})+f(\delta^2[x,y]_\mathrm{L})-\delta_Vf(\delta[x,y]_\mathrm{L})\\
		&-\rho(\delta x)f(\delta y)+\rho(\delta x)\delta_Vf(y)+\rho(\delta y)f(\delta x)-\rho(\delta y)\delta_Vf(x)\\
		=&0.
	\end{align*}
\end{proof}
\begin{re}
	According to Lemma 3.1 of \cite{T0}, we have 
	\begin{equation}\label{derivation coboundary2}
		(\Delta^2\circ\partial^1_\mathrm{CE}-\partial^1_\mathrm{CE}\circ\Delta^1)(f)=0.
	\end{equation}
\end{re}
\begin{theorem}\label{thm coboundary}
	The map $\mathfrak{D}_\mathrm{InvDer}$ is a coboundary operator, i.e. $\mathfrak{D}^2_\mathrm{InvDer}\circ\mathfrak{D}^1_\mathrm{InvDer}=0$
\end{theorem}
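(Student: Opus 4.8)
The plan is to unwind $\mathfrak{D}^2_\mathrm{InvDer}\circ\mathfrak{D}^1_\mathrm{InvDer}$ on an arbitrary cochain $f\in\mathfrak{C}^1_\mathrm{InvDer}(L;V)$ and to verify, component by component, that the resulting triple in $\mathfrak{C}^3_\mathrm{InvDer}(L;V)$ is zero. Writing $\mathfrak{D}^1_\mathrm{InvDer}(f)=(f_1,g_1,h_1)$ with $f_1=\partial^1_\mathrm{CE}(f)$, $g_1=-\Delta^1(f)$ and $h_1=-\Delta^1_\mathrm{Inv}(f)$, the definition of $\mathfrak{D}^2_\mathrm{InvDer}$ gives
\[
\mathfrak{D}^2_\mathrm{InvDer}(f_1,g_1,h_1)=\bigl(\partial^2_\mathrm{CE}(f_1),\ \partial^1_\mathrm{CE}(g_1)+\Delta^2(f_1),\ \phi^1(h_1)-\Delta^2_\mathrm{Inv}(f_1)\bigr),
\]
so the whole statement reduces to showing that these three entries vanish.

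For the first entry, $\partial^2_\mathrm{CE}(f_1)=\partial^2_\mathrm{CE}(\partial^1_\mathrm{CE}(f))=0$, since $(C^\bullet(L;V),\partial_\mathrm{CE})$ is the Chevalley--Eilenberg cochain complex of the underlying Lie algebra with coefficients in $\mathcal V$, for which $\partial^2_\mathrm{CE}\circ\partial^1_\mathrm{CE}=0$. For the second entry, linearity of $\partial^1_\mathrm{CE}$ gives $\partial^1_\mathrm{CE}(g_1)+\Delta^2(f_1)=(\Delta^2\circ\partial^1_\mathrm{CE}-\partial^1_\mathrm{CE}\circ\Delta^1)(f)$, which is zero by \eqref{derivation coboundary2}. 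For the third entry, linearity of $\phi^1$ gives $\phi^1(h_1)-\Delta^2_\mathrm{Inv}(f_1)=-\bigl(\phi^1\circ\Delta^1_\mathrm{Inv}+\Delta^2_\mathrm{Inv}\circ\partial^1_\mathrm{CE}\bigr)(f)$, which is zero by \eqref{derivation coboundary1}. Hence all three components vanish and $\mathfrak{D}^2_\mathrm{InvDer}\circ\mathfrak{D}^1_\mathrm{InvDer}=0$.

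In this organization there is no genuine obstacle: the theorem is essentially a packaging statement once the two auxiliary identities \eqref{derivation coboundary1} and \eqref{derivation coboundary2} are available, the only care required being the sign bookkeeping in the last two components and checking that $f$ occupies the correct argument of each identity. If a fully self-contained account were wanted, the one computation that still needs to be carried out is the proof of \eqref{derivation coboundary2} (the analogue in the present setting of Lemma~3.1 of \cite{T0}); it can be obtained by the same kind of direct expansion used above for \eqref{derivation coboundary1}, namely writing out $\partial^1_\mathrm{CE}$ explicitly and invoking the representation axioms \eqref{eq rep 1}--\eqref{eq rep 3}, in particular \eqref{eq rep 3}.
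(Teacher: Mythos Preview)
Your proof is correct and follows essentially the same route as the paper's: both apply $\mathfrak{D}^2_\mathrm{InvDer}$ to the triple $(\partial^1_\mathrm{CE}(f),-\Delta^1(f),-\Delta^1_\mathrm{Inv}(f))$ and kill the three resulting components using, respectively, $\partial_\mathrm{CE}^2\circ\partial_\mathrm{CE}^1=0$, identity \eqref{derivation coboundary2}, and identity \eqref{derivation coboundary1}. Your write-up is slightly more explicit about the sign bookkeeping, but the argument is the same.
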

\begin{proof}
	Let $f\in \mathfrak{C}^1_\mathrm{InvDer}(L;V)$. Using equations \eqref{derivation coboundary1} and \eqref{derivation coboundary2} and the fact that $\partial_\mathrm{CE}$ is a coboundary operator we obtain the following
	\begin{align*}
		\mathfrak{D}^2_\mathrm{InvDer}(\mathfrak{D}^1_\mathrm{InvDer})(f)&=\mathfrak{D}^2_\mathrm{InvDer}(\partial^1_\mathrm{CE}(f),-\Delta^1(f),-\Delta^1_\mathrm{Inv}(f))\\
		&=(\partial^2_\mathrm{CE}(\partial^1_\mathrm{CE})(f),-\partial^1_\mathrm{CE}(\Delta^1(f))+\Delta^2(\partial^1_\mathrm{CE}(f)),-\phi^1(\Delta^1_\mathrm{Inv}(f))-\Delta^2_\mathrm{Inv}(\partial^1_\mathrm{CE}(f)))\\
		&=0.
	\end{align*}
This completes the proof.
\end{proof}
\begin{defn}
	 Th quotient 
	\begin{equation*}
		\mathcal{H}^2_\mathrm{InvDer}(L;V)=\frac{\mathcal{Z}^2_\mathrm{InvDer}(L;V)}{\mathcal{B}^2_\mathrm{InvDer}(L;V)},
	\end{equation*}
	is called the second cohomology group of the InvDer Lie algebra $(\mathfrak{L},\delta)$.
\end{defn}
\begin{re}
	If $\delta$ is not an Inv-Derivation, means that it is a derivation on $\mathfrak{L}$ then the cohomology of InvDer Lie algebra reduces to the simply cohomology of LieDer pair $(\mathfrak{L},\delta)$ see \cite{T0} for more details.
\end{re}
\section{Formal one-parameter deformation of InvDer Lie algebra}
In this section, we discuss how the cohomology of InvDer Lie algebra can be used to study one-parameter formal deformations of an InvDer Lie algebra $(L,[-,-]_\mathrm{L},\delta)$, where the bracket $[-,-]_\mathrm{L}$ and the structure derivation $\delta$ are deformed. We use $\mathbb{K}[[t]]$ to denote the ring of formal power series over $K$ in a formal parameter $t$. For a vector space $L$, we use $L[[t]]$ to denote the space of formal power series of the form $\displaystyle\sum_{i\geq0}x_it^i$ for $x_i\in L$. We extend to InvDer Lie algebra the formal $1$-parameter formal deformation for a Lie algebra, means that we define the concept of $1$-parameter formal deformation for InvDer Lie algebra.
\begin{defn}
	Let $(\mathrm{L},[-,-]_\mathrm{L}=\mu(-,-),\delta)$ be an InvDer Lie algebra. A one-parameter formal deformation of $L$ is a couple $(\mu_t, \delta_t)$ contains the $\mathbb{K}[[\mathrm{t}]]$-bilinear map and $\mathbb{K}[[\mathrm{t}]]$-linear map
	\begin{equation*}
		\mu_t:L[[t]]\times L[[t]]\rightarrow L[[t]],\quad \delta_t : L[[t]] \to L[[t]],
	\end{equation*}
	of the forms
	
	\begin{equation*}
		\mu_t=\displaystyle\sum_{i\geq 0}\mu_i t^i,\quad \delta_\mathrm{t}=\displaystyle\sum_{i\geq 0}\delta_i t^i,
	\end{equation*}
	where each $\mu_i : L\times L\to L$ is a $\mathbb{K}$-bilinear map, and $\delta_i : L\to L$ is a $\mathbb{K}$-linear map satisfying
	\begin{eqnarray*}
		&&\circlearrowleft_{x,y,z}\mu_\mathrm{t}( x,\mu_\mathrm{t}(y,z))=0,\\
		&&\delta_t\mu_t(x,y)-\mu_t(\delta_tx,y)-\mu_t(x,\delta_ty)=0,\\
		&&(\delta_t \delta_t) \mu_\mathrm{t}(x,y)-\mu_\mathrm{t}(\delta_t x,\delta_t y)=0,
	\end{eqnarray*} 
	with $\mu_0=[-,-]_\mathrm{L},~\delta_0=\delta$. 
\end{defn}

\begin{re}
Notice that in the preceding definition of formal deformation, we omitted the InvDer-Jacobi identity for $\mu_t$ and $\delta_t$. This omission is a result of Proposition 2.7 in \cite{B0}. The InvDer-Jacobi identity is a consequence of the Jacobi identity of the specified Lie algebra and an invertible derivation, such that its inverse is also a derivation. Consequently, the InvDer-Jacobi identity will emerge from the stated conditions for $\mu_t$ and $\delta_t$.
\end{re}
For all $n\geq 0$, we have the following equations: 
\begin{align*}
	&\circlearrowleft_{x,y,z}\displaystyle\sum_{i+j=n}\mu_i( x,\mu_j(y,z))=0,\\
	&\sum_{i+j=n}\delta_i\mu_j(x,y)-\mu_i(\delta_jx,y)-\mu_i(x,\delta_jy)=0,\\
	&\sum_{i+j+k=n}\delta_i\delta_j \mu_\mathrm{k}(x,y)-\sum_{i+j+k=n} \mu_\mathrm{i}(\delta_j x,\delta_k y)=0.
\end{align*}
For $\mathrm{n=0}$, we deduce that $(\mathfrak{L},\delta)$ is InvDer Lie algebra. For $\mathrm{n=1}$, we have the following equations:
\begin{align}
	& \circlearrowleft_{x,y,z}\displaystyle [ x,\mu_1(y,z)]_\mathrm{L}+ \circlearrowleft_{x,y,z} \mu_1( x,[y,z]_\mathrm{L})=0,\label{equation def cocycle1}\\
	&\delta_1[x,y]_\mathrm{L}+\delta\mu_1(x,y)-\mu_1(\delta x,y)-[\delta_1x,y]_\mathrm{L}-\mu_1(x,\delta y)-[x,\delta_1y]_\mathrm{L}=0,\label{cocycle def 3}\\
	&\delta_1\delta [x,y]_\mathrm{L}+ \delta\delta_1 [x,y]_\mathrm{L}+ \delta\delta \mu_1(x,y)-\mu_1(\delta x,\delta y))- [\delta_1 x,\delta y]_\mathrm{L}- [\delta x,\delta_1 y]_\mathrm{L}=0\label{equation def cocycle2}.
\end{align}
Equation \eqref{equation def cocycle1} is equivalent to
\begin{equation*}
	\begin{split}
		&[ x,\mu_1(y,z)]_\mathrm{L}+[ y,\mu_1(z,x)]_\mathrm{L}+[ z,\mu_1(x,y)]+\mu_1( x,[y,z]_\mathrm{L})+\mu_1( y,[z,x]_\mathrm{L})+\mu_1(z,[x,y]_\mathrm{L}) =0,
	\end{split}
\end{equation*}
which is exactly 
\begin{equation}\label{deformation 1}
	\partial^2_\mathrm{CE}(\mu_1)(x,y,z)=0.
\end{equation}
The equation \eqref{cocycle def 3} is equivalent to 
\begin{equation*}
	-(\mu_1(\delta x,y)+\mu_1(x,\delta y)-\delta\mu_1(x,y))-([x,\delta_1y]_\mathrm{L}-[y,\delta_1x]_\mathrm{L}-\delta_1[x,y]_\mathrm{L})=0,
\end{equation*}
which is exactly 
\begin{equation}\label{deformation 2}
	(\partial^1_\mathrm{CE}(\delta_1)+\Delta(\mu_1))(x,y)=0.
\end{equation}
The equation \eqref{equation def cocycle2} is equivalent to 
\begin{equation*}
	\Big(\delta^2\mu_1(x,y)-\mu_1(\delta x,\delta y)\Big)+\Big(\delta\delta_1[x,y]_\mathrm{L}+\delta_1\delta[x,y]_\mathrm{L}-[\delta x,\delta_1y]_\mathrm{L}-[\delta_1x,\delta y]_\mathrm{L}\Big)=0,
\end{equation*}
which is exactly 
\begin{equation}\label{deformation 3}
	(\phi^1(\delta_1)-\Delta^2_\mathrm{Inv}(\mu_1))(x,y)=0.
\end{equation}
Therefore, according to equations \eqref{deformation 1}, \eqref{deformation 2} and \eqref{deformation 3}, we obtain 
\begin{equation*}
	(\partial^2_\mathrm{CE}(\mu_1),\partial^1_\mathrm{CE}(\delta_1)+\Delta(\mu_1),\phi^1(\delta_1)-\Delta^2_\mathrm{Inv}(\mu_1))=0,
\end{equation*}
means that 
\begin{equation*}
	\mathfrak{D}^2_\mathrm{InvDer}(\mu_1,\delta_1,\delta_1)=0.
\end{equation*}
In conclusion, $(\mu_1,\delta_1,\delta_1)\in\mathfrak{C}^2_\mathrm{InvDer}(L;V)$ is a $2$-cocycle of the InvDer Lie algebra $(\mathfrak{L},\delta)$ with coefficients in the adjoint representation.
Thus, from the above discussion, we have the following result:
\begin{theorem}
	Let $(\mathrm{\mu_t,\delta_t})$ be a one-parameter formal deformation of an InvDer Lie algebra $(\mathrm{\mathfrak{L},\delta})$. Then $\mathrm{(\mu_1,\delta_1,\delta_1)}$ is a $2$-cocycle of the InvDer Lie algebra $(\mathfrak{L},\delta)$ with coefficients in the adjoint representation.
\end{theorem}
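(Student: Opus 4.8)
The statement is a reorganisation of the degree-one part of the deformation equations, so the plan is to carry out that reorganisation carefully and match it, term by term, with the definition of $\mathfrak{D}^2_\mathrm{InvDer}$ applied to the adjoint representation (that is, $V=L$, $\rho=\mathrm{ad}$, $\delta_V=\delta$). First I would substitute $\mu_t=\sum_{i\geq0}\mu_it^i$ and $\delta_t=\sum_{i\geq0}\delta_it^i$ into the three identities defining a one-parameter formal deformation and collect, in each, the coefficient of $t^n$; the $n=0$ coefficients merely recover the InvDer Lie algebra axioms for $(\mathfrak{L},\delta)$, so there is nothing to check there, and the argument concentrates on $n=1$.

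Next I would treat the three $n=1$ identities one at a time. From $\circlearrowleft_{x,y,z}\mu_t(x,\mu_t(y,z))=0$, using $\mu_0=[-,-]_\mathrm{L}$, the $t^1$-coefficient is $\circlearrowleft_{x,y,z}\big([x,\mu_1(y,z)]_\mathrm{L}+\mu_1(x,[y,z]_\mathrm{L})\big)=0$; expanding the cyclic sum and using the antisymmetry of $\mu_1$ identifies this with $\partial^2_\mathrm{CE}(\mu_1)(x,y,z)=0$ for the adjoint representation. From $\delta_t\mu_t(x,y)-\mu_t(\delta_tx,y)-\mu_t(x,\delta_ty)=0$ the $t^1$-coefficient is $\delta_1[x,y]_\mathrm{L}+\delta\mu_1(x,y)-\mu_1(\delta x,y)-[\delta_1x,y]_\mathrm{L}-\mu_1(x,\delta y)-[x,\delta_1y]_\mathrm{L}=0$; grouping the three $\mu_1$-terms into $-\Delta^2(\mu_1)(x,y)$ and the three $\delta_1$-terms (after writing $[\delta_1x,y]_\mathrm{L}=-[y,\delta_1x]_\mathrm{L}$) into $-\partial^1_\mathrm{CE}(\delta_1)(x,y)$ shows this is $\big(\partial^1_\mathrm{CE}(\delta_1)+\Delta^2(\mu_1)\big)(x,y)=0$. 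Finally, from $(\delta_t\delta_t)\mu_t(x,y)-\mu_t(\delta_tx,\delta_ty)=0$ the $t^1$-coefficient is $\delta\delta_1[x,y]_\mathrm{L}+\delta_1\delta[x,y]_\mathrm{L}+\delta^2\mu_1(x,y)-\mu_1(\delta x,\delta y)-[\delta_1x,\delta y]_\mathrm{L}-[\delta x,\delta_1y]_\mathrm{L}=0$; separating the pure-$\mu_1$ part $\delta^2\mu_1(x,y)-\mu_1(\delta x,\delta y)=-\Delta^2_\mathrm{Inv}(\mu_1)(x,y)$ from the pure-$\delta_1$ part and matching the latter (again using antisymmetry) with $\phi^1(\delta_1)(x,y)$ gives $\big(\phi^1(\delta_1)-\Delta^2_\mathrm{Inv}(\mu_1)\big)(x,y)=0$.

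To conclude, I would observe that the three vanishing expressions $\partial^2_\mathrm{CE}(\mu_1)$, $\partial^1_\mathrm{CE}(\delta_1)+\Delta^2(\mu_1)$, $\phi^1(\delta_1)-\Delta^2_\mathrm{Inv}(\mu_1)$ are exactly the three coordinates of $\mathfrak{D}^2_\mathrm{InvDer}(\mu_1,\delta_1,\delta_1)\in\mathfrak{C}^3_\mathrm{InvDer}(L;L)$, whence $\mathfrak{D}^2_\mathrm{InvDer}(\mu_1,\delta_1,\delta_1)=0$, i.e.\ $(\mu_1,\delta_1,\delta_1)\in\mathcal{Z}^2_\mathrm{InvDer}(L;L)$. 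The only step demanding real care is the middle paragraph: one must verify that, after regrouping, the $\delta_1$-contributions reproduce the Chevalley--Eilenberg differential $\partial^1_\mathrm{CE}$ and the operator $\phi^1$ precisely — including the signs and the use of antisymmetry — and that the $\mu_1$-contributions reproduce $\Delta^2$ and $\Delta^2_\mathrm{Inv}$ precisely; the rest is a routine comparison of power-series coefficients, and no genuine obstacle arises.
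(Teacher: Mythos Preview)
Your proposal is correct and follows essentially the same approach as the paper: expand the three deformation identities in powers of $t$, isolate the $t^1$-coefficients, and regroup each into the corresponding component of $\mathfrak{D}^2_\mathrm{InvDer}(\mu_1,\delta_1,\delta_1)$ for the adjoint representation. The paper's argument is laid out in the discussion immediately preceding the theorem and matches your regroupings (including the identification of the $\delta_1$-part of the third equation with $\phi^1(\delta_1)$ via antisymmetry) term for term.
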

\begin{defn}
	The $2$-cocycle $(\mu_1,\delta_1,\delta_1)$ is called the infinitesimal of the formal one parameter deformation $(\mu_t,\delta_t)$ of the InvDer Lie algebra $(\mathfrak{L},\delta)$.
\end{defn}
\begin{defn}
	Let $(\mu_t,\delta_t)$ and $(\mu^\prime_t,\delta^\prime_t)$ be two formal one-parameter deformations of an InvDer Lie algebra $(\mathfrak{L},\delta)$. A formal isomorphism between these two deformations is a power series
	\begin{equation*}
		\psi_t=\displaystyle\sum_{i=0}^{\infty}\psi_it^i:L[[t]]\rightarrow L[[t]],
	\end{equation*} 
where $\psi_i:L[[t]]\rightarrow L[[t]]$ are linear maps and $\psi_0=\mathrm{id_L}$ such that 
\begin{align*}
	\psi_t\circ\mu^\prime_t&=\mu_t\circ(\psi_t\otimes\psi_t),\\
	\psi_t\circ \delta^\prime_t&=\delta_t\circ\psi_t.
\end{align*}
Now expanding previous two equations and equating the coefficients of $t^n$ from both sides and putting $n=1$ we get 
\begin{align*}
	\mu_1^\prime(x,y)&=\mu_1(x,y)+\mu(\psi_1(x),y)+\mu(x,\psi_1(y))-\psi_1(\mu(x,y)),\\
	\delta^\prime_1x&=\delta_1x+\delta(\psi_1(x))-\psi(\delta x).
\end{align*}
Then, we have 
\begin{equation*}
	(\mu_1^\prime,\delta_1^\prime,\delta_1^\prime)-(\mu_1,\delta_1,\delta_1)=(\partial^1_\mathrm{CE}(\psi_1),-\Delta^1(\psi_1),-\Delta^1_\mathrm{Inv}(\psi_1))=\mathfrak{D}^2_\mathrm{InvDer}(\psi_1)\in \mathfrak{C}^1_\mathrm{InvDer}(L,L).
\end{equation*}
\end{defn}
\section{Central extension of InvDer Lie algebras}
\begin{defn}
	Let $(\mathcal{V},\delta_V)$ be an abelian InvDer Lie algebra and $(\mathfrak{L},\delta)$ be an InvDer Lie algebra. An exact sequence of InvDer Lie algebra morphisms
			\begin{center}
		$\xymatrix{
			0 \ar[r] & \mathcal{V} \ar[r]^i \ar[d]^{\delta_V} & \widehat{\mathfrak{L}} \ar[r]^p \ar[d]^{\widehat{\delta}} & \mathfrak{L} \ar[r] \ar[d]^{\delta} &0\\
			0 \ar[r] & \mathcal{V} \ar[r]^i & \widehat{\mathfrak{L}} \ar[r]^p & \mathfrak{L} \ar[r] &0
		}$
	\end{center}
is called a central extension of $(\mathfrak{L},\delta)$ by $(\mathcal{V},\delta_V)$ if $[V,\widehat{L}]_{\widehat{\mathfrak{L}}}=0$. Here, we identify $\mathcal{V}$ with the corresponding subalgebra of $\widehat{\mathfrak{L}}$, means that $\widehat{\delta}\vert_V=\delta_V$.
\end{defn}
\begin{defn}\label{definition iso extension}
	Let $(\widehat{\mathfrak{L}}_1,\widehat{\delta}_1)$ and $(\widehat{\mathfrak{L}}_2,\widehat{\delta}_2)$ be two central extensions of $(\mathfrak{L},\delta)$ by $(V,\delta_V)$. They are said to be isomorphic if there exists an InvDer Lie algebra morphism 
	\begin{equation*}
		\varphi:(\widehat{\mathfrak{L}}_1,\widehat{\delta}_1)\rightarrow (\widehat{\mathfrak{L}}_2,\widehat{\delta}_2),
	\end{equation*}
such that the following diagram is commutative
	$$\begin{CD}
	0@>>> (\mathcal{V},\delta_V) @>\mathrm{i_1} >> (\widehat{\mathfrak{L}}_1,\widehat{\delta}_1) @>\mathrm{p_1} >> (\mathfrak{L},\delta) @>>>\mathrm{0}\\
	@. @| @V \mathrm{\varphi} VV @| @.\\
	0@>>> (\mathcal{V},\delta_V) @>\mathrm{i_2} >> (\widehat{\mathfrak{L}}_2,\widehat{\delta}_2) @>\mathrm{p_2} >> (\mathfrak{L},\delta) @>>>\mathrm{0}.
\end{CD}$$
\end{defn}
A section of central extension $(\widehat{\mathfrak{L}},\widehat{\delta})$ of $(\mathfrak{L},\delta)$ by $(V,\delta_V)$ is a linear map $s:L\rightarrow \widehat{L}$ such that 
\begin{equation*}
	p\circ s=\mathrm{Id}.
\end{equation*}
Let $(\widehat{\mathfrak{L}},\widehat{\delta})$ be central extension of an InvDer Lie algebra $(\mathfrak{L},\delta)$ by an abelian InvDer Lie algebra $(V,\delta_V)$ and $s:L\rightarrow \widehat{L}$ be a section of it. We define linear maps 
\begin{equation*}
	\gamma:L\wedge L\rightarrow V,\quad \chi:L\rightarrow V,
\end{equation*}
respectively by 
\begin{align*}
	\gamma(x,y)&=[s(x),s(y)]_{\widehat{\mathfrak{L}}}-s[x,y],\\
	\chi(x)&=\widehat{\delta}s(x)-s(\delta x), \forall x,y\in L.
\end{align*}
We have $\widehat{L}$ is isomorphic to $L\oplus V$ as a vector spaces. So we can define the InvDer Lie algebraic structure on $L\oplus V$ similarly to $\widehat{\mathfrak{L}}$ to obtain an InvDer Lie algebra $(L\oplus V,[-,-]_\gamma,\delta_\chi)$, where the InvDer Lie structure is given by 
\begin{align}
	[x+u,y+v]_\gamma&=[x,y]_\mathrm{L}+\gamma(x,y),\\
	\delta_\chi(x+u)&=\delta x+\chi(x)+\delta_Vu,\quad \forall x,y\in L, u,v\in V. 
\end{align}
\begin{prop}
	With the above notation, $(L\oplus V,[-,-]_\gamma,\delta_\chi)$ is an InvDer Lie algebra if and only if $(\gamma,\chi,\chi)$ is a $2$-cocycle of the InvDer Lie algebra $(\mathfrak{L},\delta)$ with coefficients in the trivial representation $(V;\rho=0,\delta_V)$ means that $(\gamma,\chi,\chi)$ satisfy the following equations 
	\begin{align}
		\gamma([x,y]_\mathrm{L},z)+\gamma([y,z]_\mathrm{L},x)+\gamma([z,x]_\mathrm{L},y)&=0,\label{extension 1}\\
		\chi([x,y]_\mathrm{L})+\delta_V(\gamma(x,y))-\gamma(\delta x,y)-\gamma(x,\delta y)&=0,\label{extension 2}\\
		\chi(\delta[x,y]_\mathrm{L})+\delta_V\chi([x,y]_\mathrm{L})+\delta_V^2\gamma(x,y)-\gamma(\delta x,\delta y)&=0,\label{extension 3}
	\end{align}
for all $x,y,z\in L$.
\end{prop}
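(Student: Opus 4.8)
The plan is to unwind, one requirement at a time, what it means for $(L\oplus V,[-,-]_\gamma,\delta_\chi)$ to be an InvDer Lie algebra and to check that each requirement is \emph{equivalent} to one of the three displayed equations. By the simplified description of InvDer Lie algebras (the one obtained after Proposition~\ref{prop-2.4} and Remark~\ref{remark-2.5}, which makes the InvDer-Jacobi identity redundant), this amounts to verifying that $[-,-]_\gamma$ is a Lie bracket, that $\delta_\chi$ is invertible, that $\delta_\chi\in\mathrm{Der}(L\oplus V)$, and that $\delta_\chi^2[-,-]_\gamma=[\delta_\chi(-),\delta_\chi(-)]_\gamma$. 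Throughout, the computations simplify drastically because of centrality: $\gamma$ and $\chi$ are $V$-valued and $[-,-]_\gamma$ forgets the $V$-components of its inputs, so $[V,L\oplus V]_\gamma=0$ and every bracket carrying a $V$-entry drops out.

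First I would dispose of antisymmetry (inherited from $[-,-]_\mathrm{L}$ and the antisymmetry of $\gamma$) and of invertibility: in the splitting $L\oplus V$ the map $\delta_\chi(x+u)=\delta x+(\chi(x)+\delta_V u)$ is block lower-triangular with invertible diagonal blocks $\delta$ and $\delta_V$, hence invertible, and this imposes no condition on $(\gamma,\chi)$. For the Jacobi identity I would expand $\circlearrowleft_{x,y,z}[x+u,[y+v,z+w]_\gamma]_\gamma$; centrality collapses it to $\circlearrowleft_{x,y,z}[x,[y,z]_\mathrm{L}]_\mathrm{L}+\circlearrowleft_{x,y,z}\gamma(x,[y,z]_\mathrm{L})$, the first sum vanishing by the Jacobi identity of $\mathfrak{L}$ and the second, after writing $\gamma(x,[y,z]_\mathrm{L})=-\gamma([y,z]_\mathrm{L},x)$, being zero exactly when \eqref{extension 1} holds. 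For the derivation property I would compute $\delta_\chi([x+u,y+v]_\gamma)=\delta[x,y]_\mathrm{L}+\chi([x,y]_\mathrm{L})+\delta_V\gamma(x,y)$ and $[\delta_\chi(x+u),y+v]_\gamma+[x+u,\delta_\chi(y+v)]_\gamma=\delta[x,y]_\mathrm{L}+\gamma(\delta x,y)+\gamma(x,\delta y)$ (here $\delta\in\mathrm{Der}(\mathfrak{L})$ is used); the $L$-components agree identically and the $V$-components agree iff \eqref{extension 2} holds. For the Inv-derivation property I would compute $\delta_\chi^2([x+u,y+v]_\gamma)=\delta^2[x,y]_\mathrm{L}+\chi(\delta[x,y]_\mathrm{L})+\delta_V\chi([x,y]_\mathrm{L})+\delta_V^2\gamma(x,y)$ and $[\delta_\chi(x+u),\delta_\chi(y+v)]_\gamma=\delta^2[x,y]_\mathrm{L}+\gamma(\delta x,\delta y)$ (using that $\delta$ is an Inv-derivation); comparing $V$-components gives \eqref{extension 3}.

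To package this I would then match the three equations with the components of $\mathfrak{D}^2_\mathrm{InvDer}$ evaluated in the trivial representation $(V;\rho=0,\delta_V)$: with $\rho=0$ one has $\partial^2_\mathrm{CE}(\gamma)(x,y,z)=\pm(\gamma([x,y]_\mathrm{L},z)+\gamma([y,z]_\mathrm{L},x)+\gamma([z,x]_\mathrm{L},y))$, $\partial^1_\mathrm{CE}(\chi)(x,y)=-\chi([x,y]_\mathrm{L})$, $\Delta^2(\gamma)(x,y)=\gamma(\delta x,y)+\gamma(x,\delta y)-\delta_V\gamma(x,y)$, $\phi^1(\chi)(x,y)=\delta_V\chi([x,y]_\mathrm{L})+\chi(\delta[x,y]_\mathrm{L})$ and $\Delta^2_\mathrm{Inv}(\gamma)(x,y)=\gamma(\delta x,\delta y)-\delta_V^2\gamma(x,y)$, so that \eqref{extension 1}--\eqref{extension 3} say precisely $\partial^2_\mathrm{CE}(\gamma)=0$, $\partial^1_\mathrm{CE}(\chi)+\Delta^2(\gamma)=0$ and $\phi^1(\chi)-\Delta^2_\mathrm{Inv}(\gamma)=0$, i.e. $\mathfrak{D}^2_\mathrm{InvDer}(\gamma,\chi,\chi)=0$. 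There is no deep obstacle: the only points needing care are the sign bookkeeping when identifying the Jacobi relation with $\partial^2_\mathrm{CE}(\gamma)=0$, and checking that each of the four structural conditions is genuinely equivalent to (not merely implied by) the corresponding cocycle equation, so that the ``if and only if'' holds in both directions.
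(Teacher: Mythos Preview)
Your proof is correct and follows essentially the same approach as the paper: expand each structural requirement (Jacobi, derivation, Inv-derivation) for $(L\oplus V,[-,-]_\gamma,\delta_\chi)$ and match it with the corresponding equation \eqref{extension 1}--\eqref{extension 3}. Your treatment is in fact slightly more thorough than the paper's, since you explicitly address invertibility of $\delta_\chi$ and spell out the identification of \eqref{extension 1}--\eqref{extension 3} with the components of $\mathfrak{D}^2_\mathrm{InvDer}(\gamma,\chi,\chi)=0$, which the paper leaves implicit.
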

\begin{proof}
	If $(L\oplus V,[-,-]_\gamma,\delta_\chi)$ is an InvDer Lie algebra, we have  
	\begin{equation*}
		[[x+u,y+v]_\gamma,\delta_\chi(z+t)]_\gamma+\mathrm{c.p}=0,\quad \forall x,y,z\in L \text{ and } \forall u,v,t\in V,
	\end{equation*}
which is equivalent to 
\begin{align}
	[[x+u,y+v]_\gamma,z+t]_\gamma+\mathrm{c.p}&=0,\label{extension 4}\\
	\delta_\chi[x+u,y+v]_\gamma&=[\delta_\chi(x+u),y+v]_\gamma+[x+u,\delta_\chi(y+v)]_\gamma,\label{extension 5}\\
	\delta^2_\chi[x+u,y+v]_\gamma&=[\delta_\chi(x+u),\delta_\chi(y+v)]_\gamma.\label{extension 6}
\end{align}
Now expanding equation \eqref{extension 4} we obtain 
\begin{equation*}
	[[x,y]_\mathrm{L},z]_\mathrm{L}+\mathrm{c.p}+\gamma([x,y]_\mathrm{L},z)+\gamma([y,z]_\mathrm{L},x)+\gamma([z,x]_\mathrm{L},y)=0.
\end{equation*}
By the Jacobi identity the equation \eqref{extension 1} holds. Similarly for the equation \eqref{extension 5} we obtain
\begin{equation*}
	\delta[x,y]_\mathrm{L}+\chi([x,y]_\mathrm{L})+\delta_V\gamma(x,y)=[\delta x,y]_\mathrm{L}+\gamma(\delta x,y)+[x,\delta y]_\mathrm{L}+\gamma(x,\delta y),
\end{equation*}
and using the fact that $\delta$ is a derivation on $\mathfrak{L}$ the equation \eqref{extension 2} holds. Finally, equation \eqref{extension 6} is equivalent to 
\begin{equation*}
	\delta^2[x,y]_\mathrm{L}+\chi(\delta[x,y]_\mathrm{L})+\delta_V\chi([x,y]_\mathrm{L})+\delta_V^2\gamma(x,y)=[\delta x,\delta y]_\mathrm{L}+\gamma(\delta x,\delta y),
\end{equation*}
and using the fact that $\delta\in \mathrm{InvDer(L)}$ the equation \eqref{extension 3} holds.
Conversely, if equations \eqref{extension 4}, \eqref{extension 5} and \eqref{extension 6} hold, then it is straightforward to see that $(L\oplus V,[-,-]_\gamma,\delta_\chi)$ is an InvDer Lie algebra. 
\end{proof}
\begin{theorem}
	Let $(\mathcal{V},\delta_V)$ be an abelian InvDer Lie algebra and $(\mathfrak{L},\delta)$ be an InvDer Lie algebra. Then central extension of $(\mathfrak{L},\delta)$ by $(\mathcal{V},\delta_V)$ is classified by the second cohomology group $\mathcal{H}_\mathrm{InvDer}^2(L;V)$ of the InvDer Lie algebra $(\mathfrak{L},\delta)$ with the coefficients in the trivial representation $(V;\rho=0,\delta_V)$. 
\end{theorem}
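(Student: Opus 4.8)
The plan is to construct explicit mutually inverse maps between the set of isomorphism classes of central extensions of $(\mathfrak{L},\delta)$ by $(\mathcal{V},\delta_V)$ and the group $\mathcal{H}^2_{\mathrm{InvDer}}(L;V)$ with coefficients in the trivial representation $(V;\rho=0,\delta_V)$, using the previous Proposition as the bridge between extensions and $2$-cocycles.

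\emph{From extensions to cohomology.} Given a central extension $(\widehat{\mathfrak{L}},\widehat{\delta})$, I would first choose a linear section $s:L\to\widehat{L}$ of $p$ (which exists since $p$ is a surjection of $\mathbb{K}$-vector spaces), and form $(\gamma,\chi)$ as in the discussion preceding the previous Proposition. Because $[V,\widehat{L}]_{\widehat{\mathfrak{L}}}=0$, the $L$-action transported to $V$ is exactly $\rho=0$, and the vector space identification $\widehat{L}\cong L\oplus V$, $s(x)+i(v)\mapsto x+v$, carries the structure of $\widehat{\mathfrak{L}}$ onto $(L\oplus V,[-,-]_\gamma,\delta_\chi)$; hence by the previous Proposition $(\gamma,\chi,\chi)\in\mathcal{Z}^2_{\mathrm{InvDer}}(L;V)$. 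Next I would show the class $[(\gamma,\chi,\chi)]$ does not depend on $s$: any two sections differ by a linear map $\lambda:L\to V$ (since $p\circ(s'-s)=0$), and a direct computation using $[V,\widehat{L}]_{\widehat{\mathfrak{L}}}=0$, $\widehat{\delta}\vert_V=\delta_V$, and the trivial-representation form $\partial^1_{\mathrm{CE}}(\lambda)(x,y)=-\lambda([x,y]_\mathrm{L})$ gives $(\gamma',\chi',\chi')-(\gamma,\chi,\chi)=\mathfrak{D}^1_{\mathrm{InvDer}}(\lambda)$, so the two cocycles are cohomologous. Finally, if $\varphi:(\widehat{\mathfrak{L}}_1,\widehat{\delta}_1)\to(\widehat{\mathfrak{L}}_2,\widehat{\delta}_2)$ is an isomorphism of central extensions, then $s_2:=\varphi\circ s_1$ is a section of $p_2$ producing the \emph{same} $(\gamma,\chi,\chi)$, because $\varphi$ is an InvDer Lie algebra morphism equal to the identity on $\mathcal{V}$ and inducing the identity on $\mathfrak{L}$. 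This defines a map $\Theta$ from isomorphism classes of central extensions to $\mathcal{H}^2_{\mathrm{InvDer}}(L;V)$.

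\emph{From cohomology to extensions.} Conversely, from a $2$-cocycle $(\gamma,\chi,\chi)$ with coefficients in $(V;\rho=0,\delta_V)$ I would form $\widehat{\mathfrak{L}}:=(L\oplus V,[-,-]_\gamma,\delta_\chi)$, which is an InvDer Lie algebra by the previous Proposition, and observe that with $i(v)=0+v$, $p(x+v)=x$ and $\widehat{\delta}=\delta_\chi$ it is a central extension, since $[V,\widehat{L}]_\gamma=0$ and $\widehat{\delta}\vert_V=\delta_V$ by construction. To see this descends to cohomology classes, suppose $(\gamma',\chi',\chi')=(\gamma,\chi,\chi)+\mathfrak{D}^1_{\mathrm{InvDer}}(\lambda)$ for some linear $\lambda:L\to V$; then $\varphi:=\mathrm{Id}_{L\oplus V}+\lambda\circ p$, i.e. $\varphi(x+v)=x+(v+\lambda(x))$, is a linear isomorphism which is the identity on $\mathcal{V}$ and over $\mathfrak{L}$, and using $\gamma'-\gamma=\partial^1_{\mathrm{CE}}(\lambda)$ together with $\chi'-\chi=-\Delta^1(\lambda)$ and the corresponding identity in the third slot one checks $\varphi\circ[-,-]_{\gamma'}=[-,-]_\gamma\circ(\varphi\otimes\varphi)$ and $\varphi\circ\delta_{\chi'}=\delta_\chi\circ\varphi$, so $\varphi$ is an isomorphism of central extensions. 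This gives a well-defined map $\bar{\Xi}:\mathcal{H}^2_{\mathrm{InvDer}}(L;V)\to\{\,\text{isomorphism classes of central extensions}\,\}$.

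\emph{Mutual inverseness.} For $\Theta\circ\bar{\Xi}=\mathrm{id}$: the extension $(L\oplus V,[-,-]_\gamma,\delta_\chi)$ has the canonical section $s(x)=x+0$, for which $[s(x),s(y)]_\gamma-s([x,y]_\mathrm{L})=\gamma(x,y)$ and $\delta_\chi s(x)-s(\delta x)=\chi(x)$, so the recovered cocycle is $(\gamma,\chi,\chi)$ on the nose. For $\bar{\Xi}\circ\Theta=\mathrm{id}$: starting from $(\widehat{\mathfrak{L}},\widehat{\delta})$ with section $s$, the identification $\widehat{L}\cong L\oplus V$ used above is an InvDer Lie algebra isomorphism intertwining the two exact sequences and the structure derivations, hence an isomorphism of central extensions onto the one built from $(\gamma,\chi,\chi)$. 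I expect the main obstacle to be the bundle of routine-but-delicate verifications — that the change-of-section difference is \emph{exactly} $\mathfrak{D}^1_{\mathrm{InvDer}}(\lambda)$, and that $\varphi=\mathrm{Id}+\lambda\circ p$ respects both $[-,-]_\gamma$ and $\delta_\chi$ — each of which comes down to matching the three components of $\mathfrak{D}^1_{\mathrm{InvDer}}$ against the Leibniz-type expansions of the bracket and of $\widehat{\delta}$, using the triviality of $\rho$ throughout.
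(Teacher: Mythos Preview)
Your proposal is correct and follows essentially the same route as the paper: both directions are built from the section-dependent cocycle $(\gamma,\chi,\chi)$, independence of the section is shown via the difference map $\lambda=s'-s$ producing exactly $\mathfrak{D}^1_{\mathrm{InvDer}}(\lambda)$, isomorphic extensions are compared through $s_2=\varphi\circ s_1$, and cohomologous cocycles are linked by the explicit isomorphism $x+v\mapsto x+\lambda(x)+v$. Your write-up is in fact slightly more thorough than the paper's, since you spell out the mutual-inverseness check (via the canonical section $s(x)=x+0$ and the transport-of-structure identification $\widehat{L}\cong L\oplus V$), which the paper leaves implicit.
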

\begin{proof}
	Let $(\widehat{\mathfrak{L}},\widehat{\delta})$ be a central extension of $(\mathfrak{L},\delta)$ by $(\mathcal{V},\delta_V)$. By choosing a section $s:L\rightarrow \widehat{L}$, we obtain a $2$-cocycle $(\gamma,\chi,\chi)$. Now we prove that the cohomological class of $(\gamma,\chi,\chi)$ does not depend on the choice of sections. Let $s_1$ and $s_2$ be two different sections. We define $\Phi:L\rightarrow V$ by 
	\begin{equation*}
		\Phi(x)=s_1(x)-s_2(x),
	\end{equation*}
so, we have 
\begin{align*}
	\gamma_1(x,y)&=[s_1(x),s_2(x)]_{\widehat{\mathfrak{L}}}-s_1[x,y]_\mathrm{L}\\
	&=[s_2(x)+\Phi(x),s_2(y)+\Phi(y)]_{\widehat{\mathfrak{L}}}-s_2[x,y]_\mathrm{L}-\Phi([x,y]_\mathrm{L})\\
	&=[s_2(x),s_2(y)]-s_2[x,y]-\Phi([x,y])\\
	&=\gamma_2(x,y)-\Phi([x,y]_\mathrm{L}),
\end{align*}
and 
\begin{align*}
	\chi_1(x)&=\widehat{\delta}(s_1(x))-s_1(\delta x)\\
	&=\widehat{\delta}(s_2(x)+\Phi(x))-s_2(\delta x)-\Phi(\delta x)\\
	&=\widehat{\delta}(s_2(x))-s_2(\delta x)+\delta_V\Phi(x)-\Phi(\delta x)\\
	&=\chi_2(x)+\delta_V(\Phi(x))-\Phi(\delta x).
\end{align*}
Then, we obtain 
\begin{equation*}
	(\gamma_1,\chi_1,\chi_1)=(\gamma_2,\chi_2,\chi_2)+\mathfrak{D}^1_\mathrm{InvDer}(\Phi).
\end{equation*}
Therefore, $(\gamma_1,\chi_1,\chi_1)$ and $(\gamma_2,\chi_2,\chi_2)$ are in the same cohomological class. Finally, we show that isomorphic central extensions give rise to the same element in $\mathcal{H}^2_\mathrm{InvDer}(L;V)$. Suppose that $(\widehat{\mathfrak{L}}_1,\widehat{\delta}_1)$ and $(\widehat{\mathfrak{L}}_2,\widehat{\delta}_2)$ are two isomorphic central extensions of $(\mathfrak{L},\delta)$ by $(\mathcal{V},\delta_V)$ and $\xi:(\widehat{\mathfrak{L}}_1,\widehat{\delta}_1)\rightarrow (\widehat{\mathfrak{L}}_2,\widehat{\delta}_2)$ is an InvDer Lie algebra morphism such that we have the commutative diagram in definition \eqref{definition iso extension}. Assume that $s_1:L\rightarrow \widehat{L}$ is a section of $\widehat{\mathfrak{L}}_1$. By $p_2\circ \xi=p_1$, we have 
\begin{equation*}
	p_2=(\xi\circ s_1)=p_1\circ s_1=\mathrm{Id}.
\end{equation*}
Thus, $\xi\circ s_1$ is a section of $\widehat{\mathfrak{L}}_2$. Deifne $s_2=\xi\circ s_1$ and since $\xi$ is a morphism of InvDer Lie algebra and $\xi\vert_V=\mathrm{Id}$, we have 
\begin{align*}
	\gamma_2(x,y)&=[s_2(x),s_2(y)]_{\widehat{\mathfrak{L}}_2}-s_2[x,y]_\mathrm{L}\\
	&=[(\xi\circ s_1)(x),(\xi\circ s_2)(x)]_{\widehat{\mathfrak{L}}_2}-(\xi\circ s_1)[x,y]_\mathrm{L}\\
	&=\xi([s_1(x),s_1(y)]_{\widehat{\mathfrak{L}}_1}-s_1[x,y]_\mathrm{L})\\
	&=[s_1(x),s_1(y)]_{\widehat{\mathfrak{L}}_1}-s_1[x,y]_\mathrm{L}\\
	&=\gamma_1(x,y).
\end{align*}
Similarly, we show that $\chi_2(x)=\chi_1(x)$.
Thus, the isomorphic central extensions give rise to the same element in $\mathcal{H}_\mathrm{InvDer}^2(L;V)$. Conversely, giving two $2$-cocycles $(\gamma_1,\chi_1,\chi_1)$ and $(\gamma_2,\chi_2,\chi_2)$, we construct two central extensions $(L\oplus V,[-,-]_{\gamma_1},\delta_{\chi_1})$ and $(L\oplus V,[-,-]_{\gamma_2},\delta_{\chi_2})$ as in \eqref{extension 1}, \eqref{extension 2} and \eqref{extension 3}. If they represent the same cohomological classes, i.e., there exists $\Phi:L\rightarrow V$ such that 
\begin{equation*}
	(\gamma_1,\chi_1,\chi_1)=(\gamma_2,\chi_2,\chi_2)+\mathfrak{D}^1_\mathrm{InvDer}(\Phi),
\end{equation*}
then defining $\xi:L\oplus V\rightarrow L\oplus V$ by
\begin{equation*}
	\xi(x+u):=x+\Phi(x)+u,
\end{equation*}
we can deduce that $\xi$ is an isomorphism between central extensions. This completes the proof.
\end{proof}

\noindent {\bf Acknowledgment:}
The authors would like to thank the referee for valuable comments and suggestions on this article.  Ripan Saha is supported by the Core Research Grant (CRG) of Science and Engineering Research Board (SERB), Department of Science and Technology (DST), Govt. of India (Grant Number- CRG/2022/005332).

%%%%%%%%%%%%%%%%%%%%%%%%%%%%%%%%%%%%%%%%%%%%%%%%%%%%%%%%%%%%%%

\end{document}